\theoremstyle{thmstyleone}%
\newtheorem{theorem}{Theorem}
\theoremstyle{thmstyletwo}%
\newtheorem{remark}{Remark}%
\newtheorem{lemma}{Lemma}
\newtheorem{assumption}{Assumption}
\newtheorem{corollary}{Corollary}
\theoremstyle{thmstylethree}%
\newtheorem{definition}{Definition}%
\begin{document}

\title[Sampling theorems associated with offset linear canonical transform by polar coordinates]{Sampling theorems associated with offset linear canonical transform by polar coordinates}


\author[1,2]{\fnm{Hui} \sur{Zhao}}\email{zhao\_hui2021@163.com}

\author*[1,2]{\fnm{Bingzhao} \sur{Li}}\email{li\_bingzhao@bit.edu.cn}

\affil[1]{\orgdiv{School of Mathematics and Statistics}, \orgname{Beijing Institute of Technology},\orgaddress{\city{ Beijing}, \postcode{100081}, \country{China}}}

\affil[2]{\orgdiv{Beijing Key Laboratory on MCAACI}, \orgname{Beijing Institute of Technology},\orgaddress{\city{ Beijing}, \postcode{100081}, \country{China}}}


\abstract{Sampling theorem for the offset linear canonical transform (OLCT) of bandlimited functions in polar coordinates is an important signal analysis tool in many fields of signal processing and medical imaging. This study investigates two sampling theorems for interpolating bandlimited and highest frequency bandlimited functions in the OLCT and offset linear canonical Hankel transform (OLCHT) domains by polar coordinates. Based on the classical Stark's interpolation formulas, we derive the sampling theorems for bandlimited functions in the OLCT and OLCHT domains, respectively. The first interpolation formula is concise and applicable. Due to the consistency of OLCHT order, the second interpolation formula is superior to the first interpolation formula in computational complexity.}

\keywords{Offset linear canonical transform, Offset linear canonical Hankel transform, Sampling theorems, Polar coordinates}



\maketitle

\section{Introduction}\label{sec1}

Fourier transform (FT) is an important mathematical tool, which is widely used in numerical computation and signal processing \cite{a,b,c,d}. Due to the need to analyze and process non-stationary signals, offset linear canonical transform (OLCT) was proposed as a generalized form of the FT \cite{1,2,3,4,5,9}. Compared to FT, the OLCT is a class of linear integral transforms with six parameters $(a, b, c, d,\tau, \eta)$. Because it adds two parameters on the basis of the linear canonical transform (LCT) \cite{6,7,8}, the OLCT has greater degrees of freedom and flexibility in applications, and is also widely used in optical systems, optical signal processing and medical imaging \cite{3,4,5,9}.

OLCT, also known as special affine Fourier transform \cite{4,28} or inhomogeneous regular transform \cite{1}, is a powerful mathematical analysis tool. Let real parameters $A=(a, b, c, d,\tau, \eta)$, $a, b, c, d,\tau, \eta\in\mathbb{R}$ satisfy $ad-bc=1$. For a function $f\in L^{2}(\mathbb{R})$, the OLCT of $f$ is defined by:
\begin{align}
	\begin{split}
		F^{A}(u)=O^{A}_{L}[f(t)](u)=
		\begin{cases}
			\int_{\mathbb{R}}f(t)h_{A}(t,u)\rm{d}t,   &b\neq0  \\
			\sqrt{d}e^{i \frac{cd}{2}(u-\tau)^{2}+i\eta u}f\left[ d(u-\tau)\right] ,    &b=0		
		\end{cases}
	\end{split}
\end{align}
where the kernel $h_{A}(t,u)$ is
\begin{align}	
	h_{A}(t,u)=K_{A}\,e^{i\left[ \frac{a}{2b}|t|^{2}+\frac{1}{b}t(\tau-u)-\frac{1}{b}u(d\tau-b\eta)+\frac{d}{2b}|u|^{2}\right] },
\end{align}
and $K_{A}=\dfrac{1}{\sqrt{i2\pi |b|}}e^{i\frac{d\tau^{2}}{2b}}$.

At present, the OLCT has achieved many achievements in its mathematical foundation, theoretical extension and practical application,  such as sampling and time-frequency analysis \cite{3,4,5,9}. Sampling is the premise and foundation of digital signal processing. Due to wider applicability, people extend the sampling theorem from the traditional FT to the OLCT.

In 2007, Adrian Stern \cite{3} obtained the sampling
theorem of bandlimited functions $f(t)\in L^{2}(\mathbb{R})$ in the OLCT domain, i.e., with $$f(t)=0,\quad  x\notin[-\frac{B_{x}}{2},\frac{B_{x}}{2}],$$  $F^{A}(u)$ can be completely recovered from its samples at points spaced $\Delta_{u} \leq\frac{2\pi}{B_{x}}|b|$, using the following interpolation formula:
\begin{align}
	\begin{split}	
		F^{A}(u)&=e^{\frac{i}{2b}\left[du^{2}-2u(d\tau-b\eta) \right] }\sum_{n\in Z}F^{A}(n\Delta_{u})\frac{\sin\left[ \pi\left( \frac{u}{\Delta_{u}}-n\right) \right] }{\pi\left( \frac{u}{\Delta_{u}}-n\right)}e^{-\frac{i}{2b}\left[d\left(n\Delta_{u} \right)^{2}-2n\Delta_{u}(d\tau-b\eta) \right] }.
		\label{r2}
	\end{split}
\end{align}
(\ref{r2}) provides the interpolation formula which permits direct reconstruction in the sampled domain.

In \cite{88}, Qiang Xiang et al. obtained the sampling
theorem for $\Omega$ bandlimited functions $f(t)\in W$ ($W$ is  subspace of the space of all integrable functions) in the OLCT domain, $f(t)$ can be exactly recovered from its samples:
\begin{align}
	\begin{split}	
		f(t)&=e^{-i\frac{a}{2b}t^{2}}\sum_{n\in Z}f(n\Delta_{t})e^{i\left[ \frac{a}{2b}(n\Delta_{t})^{2} -\frac{\tau}{b}(t-n\Delta_{t})\right] }\frac{\Delta_{t}\sin\left[\frac{\Omega}{b}(t-n\Delta_{t}) \right] }{\pi(t-n\Delta_{t})},
		\label{r3}
	\end{split}
\end{align}
where sampling interval satisfies $\Delta_{t} \leq\frac{\pi b}{\Omega}$.

In 2019, Deyun Wei, et al. derived the practical multichannel sampling expansion theorem based on new convolution structure in the OLCT domain \cite{9}.
For more sampling studies see \cite{66,77,99}. The above studies are all based on the articles of one-dimensional OLCT. 

In recent years, the sampling theorem of functions in polar coordinates has a wide range of application prospects in the fields of computed tomography (CT) \cite{11,12} and magnetic resonance imaging (MRI) \cite{13,14}. According to the existing research results, a large number of interpolation formulas for $\Omega$ bandlimited functions $f(r,\theta)$ with different bandwidth constraints from its samples have appeared in the literature \cite{15,16,17,18,19,20,21,22,23}. Due to the short application time of two-dimensional OLCT in medical imaging in polar coordinates, the theoretical system based on the OLCT is not perfect. Therefore, it makes sense to explore the sampling theorems in the OLCT and OLCHT domains in polar coordinates.

Based on our previous work \cite{24}, the purpose of this paper is to study two kinds of sampling theorems for $\Omega$ bandlimited $f(r,\theta)$ and highest frequency $\omega_{p}=\frac{K}{2\pi}$ bandlimited functions from samples at the normalized zeros $\alpha_{nj}$ in radius and at the uniformly spaced points $\frac{2\pi l}{2K+1}$ in azimuuth in the OLCT and OLCHT domains in polar coordinates. The results of the study show that due to the consistency of the order of the OLCHT, the interpolation formula in the OLCHT domain is superior to the interpolation formula in the OLCT domain in terms of computational complexity. \

The paper is organized as follows. Section \ref{2D} presents our previous research work in polar coordinates. Section \ref{2D in} gives the related results to deduce the main sampling theorems. Section \ref{OLCT} derives the sampling theorem based on $\Omega$ and $w_{p}=\frac{K}{2\pi}$ bandlimited functions $f(r,\theta)$ in the OLCT domain. Section \ref{OLCHT} derives the sampling theorem based on $\Omega$ and $w_{p}=\frac{K}{2\pi}$ bandlimited functions $f(r,\theta)$ in the OLCHT domain. Section \ref{Con} draws conclusions.\

\section{Preliminaries}
\label{2D}
In a recent work \cite{24}, we introduced the knowledge related to the OLCT and the OLCHT in polar coordinates. In order to facilitate and indepth study of the integral transformation of the OLCT, we give some mathematical definitions in polar coordinates \cite{24}.


\begin{definition}[OLCT] Let $A=(a,b;c,d)\in \mathbb{R}^{2\times2}$, $\bm{\tau}=(\tau_{1},\tau_{2})\in \mathbb{R}^{2}$ and $\bm{\eta}=(\eta_{1},\eta_{2})\in \mathbb{R}^{2}$ satisfy $\det(A)=1$. The OLCT of parameters $A$, $\bm{\tau}$, and $\bm{\eta}$ of $f(r,\theta)\in L^{2}(\mathbb{R}^{2})$ in polar coordinates is defined by 
	\begin{align}
		\begin{split}
			F^{A,\bm{\tau},\bm{\eta}}(\rho,\phi)=O^{A,\bm{\tau},\bm{\eta}}_{L}[f](\rho,\phi)=\int_{0}^{+\infty}\int_{-\pi}^{\pi}f(r,\theta)P_{A,\bm{\tau},\bm{\eta}}(r,\theta;\rho,\phi)r\rm{d}r\rm{d}\theta,
			\label{2}
		\end{split}
	\end{align}		
	where
	\begin{align}
		\begin{split}	 
			P_{A,\bm{\tau},\bm{\eta}}(r,\theta;\rho,\phi)={}&K_{A,\bm{\tau},\bm{\eta}} e^{ i\left[\frac{a}{2b}r^{2}-\frac{r\rho}{b}\cos(\theta-\phi)+\frac{d}{2b}\rho^{2}+\frac{r|\bm{\tau}|}{b}\sin(\theta+\varphi_{1})-\frac{\rho|d\bm{\tau}-b\bm{\eta}|}{b}\sin(\phi+\varphi_{2})\right] },
			\label{3}
		\end{split}
	\end{align}
	is the kernel function and 
	\begin{align}
		\begin{split}
			K_{A,\bm{\tau},\bm{\eta}}=\dfrac{1}{2\pi b}e^{i\frac{d|\bm{\tau}|^{2}}{b}},
			\label{4}
		\end{split}
	\end{align}
	where $|\bm{\tau}|^{2}=\tau_{1}^{2}+\tau_{2}^{2}$, $|\bm{\eta}|^{2}=\eta_{1}^{2}+\eta_{2}^{2}$, $\tan\varphi_{1}=\dfrac{\tau_{1}}{\tau_{2}}, \tan\varphi_{2}=\dfrac{d\tau_{1}-b\eta_{1}}{d\tau_{2}-b\eta_{2}}$, $\tau_{2}\neq0$, and $d\tau_{2}-b\eta_{2}\neq0$.	
\end{definition}
It is easy to know that if $b = 0$, the OLCT of the signal reduces to a time scaled version off multiplied by a linear chirp \cite{1}. Without loss of generality, we assume $b>0$ in the following sections.
\begin{remark}\label{re11}
	Let $A=(0,1;-1,0)$, $\bm{\tau}=\bm{0}$, and $\bm{\eta}=\bm{0}$. It follows that there is a relation between the OLCT and FT \cite{16,20,25} in polar coordinates 
	\begin{align}
		\begin{split}
			F^{A,\bm{\tau},\bm{\eta}}(\rho,\phi)=\dfrac{\ell_{1}}{b}e^{i\left[ \frac{d}{2b}\rho^{2}-\frac{\rho|d\bm{\tau}-b\bm{\eta}|}{b}\sin(\phi+\varphi_{2})\right] } F[\tilde{f}]\left( \frac{\rho}{b},\phi\right), 
			\label{8}
		\end{split} 
	\end{align}	
	where $\tilde{f}(r,\theta)=e^{i\left[ \frac{a}{2b}r^{2}+\frac{r|\bm{\tau}|}{b}\sin(\theta+\varphi_{1}) \right] }f\left(r,\theta \right)\in L^{2}(\mathbb{R}^{2})$, 
	$\varphi_{1}$ and $\varphi_{2}$ are given by (\ref{3}), and
	\begin{align}
		\begin{split}
			\ell_{1}=e^{i\frac{d|\bm{\tau}|^{2}}{b}}.
			\label{9}
		\end{split} 
	\end{align}	 
\end{remark}

\begin{definition} [Inversion formula of the OLCT] For $f(r,\theta)\in L^{2}(\mathbb{R}^{2})$ and $b>0$. The inversion formula of the OLCT with parameters $A^{-1}$, $\bm{\xi}$, and $\bm{\gamma}$ in polar coordinates takes
	\begin{align}
		\begin{split}
			f(r,\theta)=O^{A^{-1},\bm{\xi},\bm{\gamma}}_{L}[F^{A,\bm{\tau},\bm{\eta}}](r,\theta),
			\label{5}
		\end{split}
	\end{align}
	where $A^{-1}=(d,-b;-c,a)\in \mathbb{R}^{2\times2}$, $\bm{\xi}=b\bm{\eta}-d\bm{\tau}\in \mathbb{R}^{2}$, and $\,\bm{\gamma}=c\bm{\tau}-a\bm{\eta}\in \mathbb{R}^{2}$.	
\end{definition}
The definition of the OLCHT is derived from the mathematical formula of the POLCT, and they are very closely related. 
\begin{definition}[OLCHT] Let $A=(a,b;c,d)\in \mathbb{R}^{2\times2}$, $\bm{\tau}=(\tau_{1},\tau_{2})\in \mathbb{R}^{2}$, and $\bm{\eta}=(\eta_{1},\eta_{2})\in \mathbb{R}^{2}$ satisfy $\det(A)=1$. The $v$th-order OLCHT of the parameters matrix $A$, $\bm{\tau}$, and $\bm{\eta}$ of $f(r)\in L^{2}(\mathbb{R})$ in polar coordinates is defined by \cite{24} 
	\begin{align}
		\begin{split}
			H_{v}^{A,\bm{\tau},\bm{\eta}}[f](\rho)&=\frac{i^{v}\ell_{1}e^{im(\varphi_{1}-\varphi_{2})}}{b}\lambda_{1}e^{i\frac{d}{2b}\rho^{2}}\int_{0}^{+\infty}\lambda_{2}e^{i\frac{a}{2b}r^{2}}f(r)J_{v}\left(\frac{r\rho}{b}\right)r\rm{d}r,
			\label{12}
		\end{split}
	\end{align}	
	where $J_{v}$ is the $v$th-order Bessel function of the first kind and order $v\geq-\frac{1}{2}$, $\varphi_{1}$ and $\varphi_{2}$ are given by (\ref{3}), $\ell_{1}$ is given by (\ref{9}), and 
	\begin{align}
		\begin{split}
			\lambda_{1}=\sum_{m=-\infty}^{+\infty}J_{m}\left(\frac{\rho|d\bm{\tau}-b\bm{\eta}|}{b}\right),\:
			\lambda_{2}=\sum_{m=-\infty}^{+\infty}J_{m}\left(\frac{r|\bm{\tau}|}{b}\right),
			\label{13}	
		\end{split}
	\end{align}	
	where $|\bm{\tau}|^{2}=\tau_{1}^{2}+\tau_{2}^{2}$, $|\bm{\eta}|^{2}=\eta_{1}^{2}+\eta_{2}^{2}$.
\end{definition}

\begin{remark}\label{re14}
	Let $A=(0,1;-1,0)$, $\bm{\tau}=\bm{0}$, and $\bm{\eta}=\bm{0}$. we can obtain the relationship between the OLCHT and HT \cite{16,20,25}, it follows that 
	\begin{align}
		\begin{split}
			H_{v}^{A,\bm{\tau},\bm{\eta}}[f](\rho)=\frac{i^{v}\lambda_{1}\ell_{1}e^{im(\varphi_{1}-\varphi_{2})}}{b}e^{i\frac{d}{2b}\rho^{2}}H_{v}[\tilde{f}]\left( \frac{\rho}{b}\right),
			\label{17}
		\end{split}
	\end{align}	
	where $\varphi_{1}$ and $\varphi_{2}$ are given by (\ref{3}), $\ell_{1}$ is given by (\ref{9}), $\lambda_{1}$ and $\lambda_{2}$ are given by (\ref{13}), and 
	\begin{align}
		\begin{split}
			\tilde{f}(r)=\lambda_{2}e^{i\frac{a}{2b}r^{2}}f(r). 
			\label{18}	
		\end{split}
	\end{align}	
\end{remark}

\begin{definition} [Inversion formula of the OLCHT] For $f(r)\in L^{2}(\mathbb{R})$ and $b>0$. The inversion formula of $v$th-order OLCHT with parameters $A$, $\bm{\tau}$, and $\bm{\eta}$ in polar coordinates takes
	\begin{align}
		\begin{split}
			f(r)&=H_{v}^{-A^{-1},-\bm{\xi},-\bm{\gamma}}\left[ H_{v}^{A,\bm{\tau},\bm{\eta}}\left[ f\right] \right] (r)\\
			&=i^{v}\frac{\ell_{2}e^{im(\varphi_{2}-\varphi_{1})}}{b}\lambda_{2}e^{-i\frac{a}{2b}r^{2}}\int_{0}^{+\infty}\lambda_{1}e^{-i\frac{d}{2b}\rho^{2}}H_{v}^{A,\bm{\tau},\bm{\eta}}\left[ f\right](\rho)J_{v}\left(\frac{\rho r}{b}\right)\rho\rm{d}\rho,
			\label{14}
		\end{split}
	\end{align}
	where  $\lambda_{1}$ and $\lambda_{2}$ are given by (\ref{13}), and $\ell_{2}=e^{-i\frac{a|b\bm{\eta}-d\bm{\tau}|^{2}}{b}}$.
\end{definition}
\section{Some preparatory results}
\label{2D in}
Based on the above basic mathematical knowledge, we next study $\Omega$ bandlimited functions $f(r,\theta)$ and related conclusions in the OLCT and OLCHT domains.

\begin{assumption}\label{as1}
	Suppose $f(r,\theta)\in L^{2}(\mathbb{R}^{2})$ satisfy the Dirichlet condition, angularly periodic in $2\pi$, and have a Fourier series expansion 
		\begin{align}
			\begin{split}
				f(r,\theta)=\sum_{n=-\infty}^{+\infty}f_{n}\left( r\right) e^{in\theta}.
				\label{1}
			\end{split}
		\end{align}

\end{assumption}

To facilitate the proof of the sampling theorem below, we give the definitions of $\Omega_{FT}$ bandlimited functions $f(r,\theta)$ in the FT domain.  

\begin{definition} Let $f(r,\theta)$ satisfy Assumption \ref{as1}, then it is $\Omega_{FT}-$bandlimited in the FT domain to the highest frequency $\omega_{m}=\frac{K}{2\pi}$ if its Fourier expansion takes \cite{16,20}
	\begin{align}
		\begin{split}
			f(r,\theta)=\sum\limits_{n=-K}^{K}f_{n}(r)e^{in\theta}.
			\label{19}
		\end{split}
	\end{align}
\end{definition}

\begin{definition} 
	Let $f(r,\theta)$ satisfy Assumption \ref{as1} and $b>0$, then it is $\Omega-$bandlimited in the OLCT domain with the parameters $A$, $\bm{\tau}$, and $\bm{\eta}$, if $F^{A,\bm{\tau},\bm{\eta}}(\rho,\phi)=0$ for $\rho\geq \Omega$, where $F^{A,\bm{\tau},\bm{\eta}}(\rho,\phi)$ is the OLCT of $f(r,\theta)$ in polar coordinates.
\end{definition}

\begin{definition}
	Let $f(r)\in L^{2}(\mathbb{R})$ and $b>0$.\, $f(r)$ is $\Omega-$bandlimited isotropic function in the OLCHT domain with the parameters $A$, $\bm{\tau}$, and $\bm{\eta}$, if $H_{v}^{A,\bm{\tau},\bm{\eta}}[f](\rho)=0$ for $\rho\geq \Omega$, where $H_{v}^{A,\bm{\tau},\bm{\eta}}[f](\rho)$ is the $v$th-order OLCHT of $f(r)$ in polar coordinates.
\end{definition}

\begin{definition} 
	Let $f(r,\theta)$ satisfy Assumption \ref{as1} and $b>0$. Then it is $\Omega-$bandlimited in the OLCHT domain, if all of the coefficients of its Fourier series are $\Omega-$bandlimited isotropic in the OLCHT domain with the parameters $A$, $\bm{\tau}$, and $\bm{\eta}$, i.e.
	\begin{eqnarray}
		H_{v}^{A,\bm{\tau},\bm{\eta}}[f_{n}](\rho)=0 \quad \text{for} \quad \rho\geq \Omega \notag 
	\end{eqnarray} 
	where $n=0,\pm1,\pm2,\cdots.$
\end{definition} 

\begin{lemma}\label{lemma4}  
	Let $f(r,\theta)$ satisfy Assumption \ref{as1} and $b>0$.
	Then the Fourier series expansion of the OLCT of $f(r,\theta)$ has a form 
	\begin{eqnarray}
		F^{A,\bm{\tau},\bm{\eta}}(\rho,\phi)=\sum_{n=-\infty}^{+\infty}H_{2n}^{A,\bm{\tau},\bm{\eta}}\left[ f_{n}\right] \left(\rho\right) e^{in\phi}.  
		\label{35}
	\end{eqnarray}
	where $n=0,\pm1,\pm2,\cdots.$
\end{lemma}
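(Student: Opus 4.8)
The plan is to start from the definition of the OLCT in polar coordinates, eqs.~(\ref{2})--(\ref{4}), substitute the Fourier series of Assumption~\ref{as1}, and interchange the (absolutely convergent) summation with the double integral. This reduces the problem, for each mode $n$, to evaluating the purely angular integral
\begin{align}
	I_{n}(r,\rho,\phi)=\int_{-\pi}^{\pi}e^{in\theta}\,e^{i\left[-\frac{r\rho}{b}\cos(\theta-\phi)+\frac{r|\bm{\tau}|}{b}\sin(\theta+\varphi_{1})\right]}\,\mathrm{d}\theta ,
\end{align}
since the remaining factors of the kernel, namely $K_{A,\bm{\tau},\bm{\eta}}$, $e^{i\frac{a}{2b}r^{2}}$, $e^{i\frac{d}{2b}\rho^{2}}$ and $e^{-i\frac{\rho|d\bm{\tau}-b\bm{\eta}|}{b}\sin(\phi+\varphi_{2})}$, either depend only on $r$ (and stay inside the radial integral) or only on $\rho,\phi$ (and are pulled out front).

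To evaluate $I_{n}$ I would expand both angular exponentials by the Jacobi--Anger formulas $e^{-iz\cos\alpha}=\sum_{k}(-i)^{k}J_{k}(z)e^{ik\alpha}$ and $e^{iz\sin\alpha}=\sum_{p}J_{p}(z)e^{ip\alpha}$, taking $z=\frac{r\rho}{b}$, $\alpha=\theta-\phi$ in the first and $z=\frac{r|\bm{\tau}|}{b}$, $\alpha=\theta+\varphi_{1}$ in the second. Inserting these and using the orthogonality relation $\int_{-\pi}^{\pi}e^{ik\theta}\,\mathrm{d}\theta=2\pi\delta_{k,0}$ collapses the resulting double series to a single sum of Bessel products, after which $J_{-k}(z)=(-1)^{k}J_{k}(z)$ is used to normalise the orders. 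Collecting the $\rho$-dependent Bessel sum into $\lambda_{1}$, the $r$-dependent one into $\lambda_{2}$ (eq.~(\ref{13})), the accumulated powers of $i$ into $i^{2n}$, and the constant into $\ell_{1}/b$ (eqs.~(\ref{4}),(\ref{9})), the $n$th contribution is recognised, by direct comparison with the OLCHT definition~(\ref{12}), as exactly $H_{2n}^{A,\bm{\tau},\bm{\eta}}[f_{n}](\rho)\,e^{in\phi}$; summing over $n$ gives~(\ref{35}). As a consistency check one can route the same computation through Remark~\ref{re11}: the classical fact that the $2$D Fourier transform in polar coordinates sends $\sum_{n}g_{n}(r)e^{in\theta}$ to $\sum_{n}2\pi(-i)^{n}e^{in\phi}\int_{0}^{\infty}g_{n}(r)J_{n}(r\rho)r\,\mathrm{d}r$, applied with $g=\tilde f$, whose Fourier coefficients already carry the offset Bessel sum, reproduces the same order-$2n$ and $\lambda$-structure after re-expressing via Remark~\ref{re14}.

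The main obstacle is the bookkeeping inside the angular integral. Because the offset term $\frac{r|\bm{\tau}|}{b}\sin(\theta+\varphi_{1})$ also depends on $\theta$, the two Jacobi--Anger expansions produce a genuine double sum, and one must track the index constraints forced by $\int_{-\pi}^{\pi}e^{ik\theta}\,\mathrm{d}\theta=2\pi\delta_{k,0}$ carefully to confirm that the surviving radial kernel is a Bessel function of order $2n$ (rather than $n$) and that the leftover offset sums reassemble precisely into $\lambda_{1}$ and $\lambda_{2}$. Equivalently, the delicate point is matching \emph{all} of the phase and normalisation factors ($i^{2n}$, $\ell_{1}$, $e^{im(\varphi_{1}-\varphi_{2})}$, $\lambda_{1}$, $\lambda_{2}$) against~(\ref{12}) so that the identification with $H_{2n}^{A,\bm{\tau},\bm{\eta}}[f_{n}]$ is exact; the interchange of summation and integration and the invocation of the standard Bessel identities are then routine.
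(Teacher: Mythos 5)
Your overall route---substitute the Fourier series of Assumption~\ref{as1} into the OLCT definition~(\ref{2}), expand the offset exponentials and the $\cos(\theta-\phi)$ factor by Jacobi--Anger, evaluate the angular integral, and match the result against the OLCHT definition~(\ref{12})---is the same route the paper takes; the paper merely invokes the integral representation $J_{n}(z)=\frac{1}{2\pi}\int_{-\pi}^{\pi}e^{i(n\theta-z\sin\theta)}\,\mathrm{d}\theta$ where you use two Jacobi--Anger expansions plus orthogonality, and these are equivalent. The genuine gap is in the step you yourself flag as delicate: orthogonality does \emph{not} produce a radial kernel of order $2n$. Writing $e^{-i\frac{r\rho}{b}\cos(\theta-\phi)}=\sum_{k}(-i)^{k}J_{k}\left(\frac{r\rho}{b}\right)e^{ik(\theta-\phi)}$ and $e^{i\frac{r|\bm{\tau}|}{b}\sin(\theta+\varphi_{1})}=\sum_{p}J_{p}\left(\frac{r|\bm{\tau}|}{b}\right)e^{ip(\theta+\varphi_{1})}$, the constraint $\int_{-\pi}^{\pi}e^{i(n+k+p)\theta}\,\mathrm{d}\theta=2\pi\delta_{n+k+p,0}$ ties $k$ to $-(n+p)$, so your angular integral collapses to the single \emph{entangled} sum
\begin{align}
	I_{n}(r,\rho,\phi)=2\pi\sum_{p}(-i)^{n+p}J_{p}\!\left(\tfrac{r|\bm{\tau}|}{b}\right)J_{n+p}\!\left(\tfrac{r\rho}{b}\right)e^{ip\varphi_{1}}e^{i(n+p)\phi},
\end{align}
in which the radial kernel is $J_{n+p}$ (not $J_{2n}$), the offset factor $J_{p}$ is coupled to it through the summation index (so it cannot be collected into the decoupled constant $\lambda_{2}$ of~(\ref{13})), and the azimuthal factor is $e^{i(n+p)\phi}$, not $e^{in\phi}$. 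Likewise the $\rho$-offset expansion carries $e^{-im'(\phi+\varphi_{2})}$ term by term, so it cannot be folded into the $\phi$-independent $\lambda_{1}$. Nothing in the computation forces $p=n$, which is exactly what order $2n$ requires; the true Fourier coefficient of $e^{iN\phi}$ is a residual sum over all index combinations satisfying $n+p-m'=N$.

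Note that the paper does not close this hole by orthogonality either: in~(\ref{a1})--(\ref{a4}) it pulls $\lambda_{1},\lambda_{2}$ out of the series~(\ref{a2})--(\ref{a3}) while retaining a single \emph{free} index $m$ in a factor $e^{im(\theta-\phi)}e^{im(\varphi_{1}-\varphi_{2})}$, arrives at $F^{A,\bm{\tau},\bm{\eta}}=\sum_{n}H_{m+n}^{A,\bm{\tau},\bm{\eta}}[f_{n}]e^{in\phi}$ with $m$ still free, and then simply \emph{sets} $m=n$ to obtain~(\ref{35}). That substitution is a fiat identification of a free index with the summation index, not a consequence of index bookkeeping. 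Your proposal, executed honestly, stalls at the entangled sum above and can only reach $H_{2n}^{A,\bm{\tau},\bm{\eta}}[f_{n}]$ by importing the same unjustified decoupling; so the claim that careful tracking of the orthogonality constraints "confirms" the order-$2n$ kernel and the $\lambda_{1},\lambda_{2}$ structure is precisely the step that fails.
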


\begin{proof}
	By (\ref{2}) and(\ref{19}), we get
	\begin{align}
		\begin{split}
			F^{A,\bm{\tau},\bm{\eta}}(\rho,\phi)&=\sum_{n=-\infty}^{+\infty}\int_{0}^{+\infty}\int_{-\pi}^{\pi}e^{in\theta}P_{A,\bm{\tau},\bm{\eta}}(r,\theta;\rho,\phi)f_{n}(r)r\rm{d}r\rm{d}\theta\\
			&=\sum_{n=-\infty}^{+\infty}\int_{0}^{+\infty}\int_{-\pi}^{\pi}e^{in\theta}\frac{\ell_{1}}{2\pi b}e^{ i\left[\frac{a}{2b}r^{2}-\frac{r\rho}{b}\cos(\theta-\phi)+\frac{d}{2b}\rho^{2}\right] }\\
			{}&\times e^{i\left[ \frac{r|\bm{\tau}|}{b}\sin(\theta+\varphi_{1})-\frac{\rho|d\bm{\tau}-b\bm{\eta}|}{b}\sin(\phi+\varphi_{2})\right] }f_{n}(r)r\rm{d}r\rm{d}\theta,
			\label{a1}
		\end{split}
	\end{align}	
	In view of the relation \cite{26}, we obtain
	\begin{align}
		\begin{split}
			e^{i \frac{r|\bm{\tau}|}{b}\sin(\theta+\varphi_{1})}=\sum_{m=-\infty}^{+\infty}J_{m}\left(\frac{r|\bm{\tau}|}{b}\right)e^{im\left( \theta+\varphi_{1}\right) },
			\label{a2}
		\end{split}
	\end{align}		
	\begin{align}
		\begin{split}
			e^{i \frac{\rho|d\bm{\tau}-b\bm{\eta}|}{b}\sin(\phi+\varphi_{2})}=\sum_{m=-\infty}^{+\infty}J_{m}\left(\frac{\rho|d\bm{\tau}-b\bm{\eta}|}{b}\right)e^{-im\left( \phi+\varphi_{2}\right) }.
			\label{a3}
		\end{split}
	\end{align}		
	It follows from a celebrated formula \cite{23}		
	\begin{align}
		\begin{split}
			J_{n}(z)=\frac{1}{2\pi}\int_{-\pi}^{+\pi}e^{i\left(n\theta-z\sin\theta \right) }\rm{d}\theta,\notag
		\end{split}
	\end{align}		
	that	
	\begin{align}
		\begin{split}
			F^{A,\bm{\tau},\bm{\eta}}(\rho,\phi)&=\sum_{n=-\infty}^{+\infty}\int_{0}^{+\infty}\int_{-\pi}^{\pi}e^{in\theta}\frac{\ell_{1}}{2\pi b}e^{ i\left[\frac{a}{2b}r^{2}-\frac{r\rho}{b}\cos(\theta-\phi)+\frac{d}{2b}\rho^{2}\right] }\lambda_{1}\lambda_{2}e^{im(\theta-\phi)}\\
			{}&\times e^{im(\varphi_{1}-\varphi_{2})}f_{n}(r)r\rm{d}r\rm{d}\theta\\
			&=\sum_{n=-\infty}^{+\infty}\frac{\ell_{1}e^{im(\theta-\phi)}}{ b}\int_{0}^{+\infty}\lambda_{1}\lambda_{2}e^{ i\left[\frac{a}{2b}r^{2}+\frac{d}{2b}\rho^{2}\right] }e^{in\phi}f_{n}(r)r{\rm{d}r}e^{-i\frac{\pi}{2}(m+n)}\\
			& \times\left\lbrace \frac{1}{2\pi}\int_{-\pi}^{\pi}e^{i\left[ (m+n)(\frac{\pi}{2}+\theta-\phi)-\frac{r\rho}{b}\sin(\frac{\pi}{2}+\theta-\phi)\right] } {\rm{d}\theta}\right\rbrace \\
			&=\sum_{n=-\infty}^{+\infty}\frac{i^{(m+n)}\ell_{1}e^{im(\theta-\phi)}}{ b}\lambda_{1}e^{i\frac{d}{2b}\rho^{2}}\int_{0}^{+\infty}\lambda_{2}e^{ i\frac{a}{2b}r^{2} }e^{in\phi}J_{m+n}(\frac{r\rho}{b})e^{in\theta}f_{n}(r)r{\rm{d}r},
			\label{a4}	
		\end{split}
	\end{align}		
	where $\lambda_{1}$ and $\lambda_{2}$ are given by (\ref{13}).\\
	From (\ref{12}), we have
	\begin{eqnarray}
		\begin{split}
			F^{A,\bm{\tau},\bm{\eta}}(\rho,\phi)=\sum_{n=-\infty}^{+\infty} H_{m+n}^{A,\bm{\tau},\bm{\eta}}[f_{n}](\rho)e^{in\phi}.
			\label{a5}
		\end{split}
	\end{eqnarray}
	Let $m=n$, we get
	\begin{eqnarray}
		\begin{split}
			F^{A,\bm{\tau},\bm{\eta}}(\rho,\phi)=\sum_{n=-\infty}^{+\infty} H_{2n}^{A,\bm{\tau},\bm{\eta}}[f_{n}](\rho)e^{in\phi}.
			\label{a6}
		\end{split}
	\end{eqnarray}
	Which completes the proof.
\end{proof}

\begin{remark}\label{re1} 
	Lemma \ref{lemma4} summarizes that the $n$th coefficient of the Fourier series of the OLCT of $f\left(r,\theta \right) $ is the $2n$th order OLCHT of the $n$th coefficient of the Fourier series of $f\left(r,\theta \right) $. If $A=(a,b;c,d)\in\mathbb{R}^{2\times2}$, $\bm{\tau}=\bm{0}$, and $\bm{\eta}=\bm{0}$, the Lemma \ref{lemma4} degenerates into the relation of the linear canonical transform (LCT) and FT \cite{23}.
\end{remark}
\begin{remark}\label{re11} 	
	When $n\to\infty$, (\ref{35}) in Lemma \ref{lemma4} can be written as
	\begin{eqnarray}
		\begin{split} 
			F^{A,\bm{\tau},\bm{\eta}}(\rho,\phi)=\sum_{n=-\infty}^{+\infty} H_{n}^{A,\bm{\tau},\bm{\eta}}[f_{n}](\rho)e^{in\phi}.
			\label{ww}
		\end{split}
	\end{eqnarray}
	Compared with Theorem 1 of \cite{24}, it is more adaptable and concise. 
\end{remark}

\begin{lemma} \label{lemma5}
	Let $f(r,\theta)$ be $\Omega-$bandlimited in the OLCT domain with parameters $A$, $\bm{\tau}$, and $\bm{\eta}$ satisfy Assumption \ref{as1} and $b>0$. Then all of the coefficients of the Fourier series of the OLCT $F^{A,\bm{\tau},\bm{\eta}}(\rho)$ are zero outside a circle of radius $\rho=\Omega$, i.e. 
	\begin{eqnarray}
		\begin{split}
			H_{n}^{A,\bm{\tau},\bm{\eta}}[f_{n}](\rho)=0, \quad \text{for} \quad \rho\geq \Omega,   
			\label{41}
		\end{split}
	\end{eqnarray}
	where $n=0,\pm1,\pm2,\cdots.$
\end{lemma}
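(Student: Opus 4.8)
The plan is to recognize the stated conclusion as an immediate consequence of the uniqueness of angular Fourier coefficients, once the angular expansion of the OLCT furnished by Lemma~\ref{lemma4} is combined with the definition of $\Omega$-bandlimitedness. First I would record the single hypothesis that really matters: by the definition of an $\Omega$-bandlimited function in the OLCT domain, $F^{A,\bm{\tau},\bm{\eta}}(\rho,\phi)=0$ for every angle $\phi$ whenever $\rho\geq\Omega$. Next I would invoke the expansion (\ref{ww}) produced by Lemma~\ref{lemma4}, namely
\[
F^{A,\bm{\tau},\bm{\eta}}(\rho,\phi)=\sum_{n=-\infty}^{+\infty} H_{n}^{A,\bm{\tau},\bm{\eta}}[f_{n}](\rho)\,e^{in\phi},
\]
which exhibits $H_{n}^{A,\bm{\tau},\bm{\eta}}[f_{n}](\rho)$ precisely as the $n$th Fourier coefficient, in the variable $\phi$, of the function $\phi\mapsto F^{A,\bm{\tau},\bm{\eta}}(\rho,\phi)$ at fixed radius $\rho$. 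Thus the entire content of the lemma reduces to the elementary fact that a function whose angular dependence vanishes identically must have all of its angular Fourier coefficients equal to zero.

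The key step is then to extract these coefficients by orthogonality of the trigonometric system $\{e^{in\phi}\}$ on $[-\pi,\pi]$. Fixing any $\rho\geq\Omega$ and any integer $k$, I would multiply the expansion by $\tfrac{1}{2\pi}e^{-ik\phi}$ and integrate over $\phi\in[-\pi,\pi]$; using $\tfrac{1}{2\pi}\int_{-\pi}^{\pi}e^{i(n-k)\phi}\,\mathrm{d}\phi=\delta_{nk}$, the sum collapses to the single surviving term and yields
\[
H_{k}^{A,\bm{\tau},\bm{\eta}}[f_{k}](\rho)=\frac{1}{2\pi}\int_{-\pi}^{\pi}F^{A,\bm{\tau},\bm{\eta}}(\rho,\phi)\,e^{-ik\phi}\,\mathrm{d}\phi.
\]
Because the integrand vanishes identically for $\rho\geq\Omega$ by the bandlimiting hypothesis, the right-hand side is zero, whence $H_{k}^{A,\bm{\tau},\bm{\eta}}[f_{k}](\rho)=0$ for all $k=0,\pm1,\pm2,\cdots$ and all $\rho\geq\Omega$, which is exactly (\ref{41}).

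The argument is short, so the only point that needs care---and the step I would regard as the main (mild) obstacle---is the justification of the term-by-term integration that turns the infinite series into a single coefficient. This requires that the expansion (\ref{ww}) converge in a sense strong enough to interchange summation and integration: either $L^{2}$ convergence on $[-\pi,\pi]$, guaranteed by the $L^{2}(\mathbb{R}^{2})$ membership together with Parseval's relation, or uniform convergence under the Dirichlet condition already imposed in Assumption~\ref{as1}. Once this interchange is legitimate, the conclusion follows purely from orthogonality, and no further property of the OLCHT operators $H_{n}^{A,\bm{\tau},\bm{\eta}}$ is needed beyond their identification, in Lemma~\ref{lemma4}, as the angular Fourier coefficients of $F^{A,\bm{\tau},\bm{\eta}}$.
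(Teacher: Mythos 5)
Your proof is correct and follows the same overall route as the paper: both start from the angular Fourier expansion of $F^{A,\bm{\tau},\bm{\eta}}$ furnished by Lemma~\ref{lemma4}, and both then observe that the bandlimiting hypothesis forces every angular coefficient to vanish for $\rho\geq\Omega$. The only real difference is the mechanism for that last step. The paper applies the Parseval identity (\ref{42}), so that $F^{A,\bm{\tau},\bm{\eta}}(\rho,\phi)=0$ for $\rho\geq\Omega$ gives $\sum_{n}|H_{2n}^{A,\bm{\tau},\bm{\eta}}[f_{n}](\rho)|^{2}=0$, a sum of nonnegative terms, whence each term vanishes; you instead extract each coefficient directly by orthogonality, writing $H_{k}^{A,\bm{\tau},\bm{\eta}}[f_{k}](\rho)=\tfrac{1}{2\pi}\int_{-\pi}^{\pi}F^{A,\bm{\tau},\bm{\eta}}(\rho,\phi)\,e^{-ik\phi}\,\mathrm{d}\phi$ and noting that the integrand is identically zero there. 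These are two faces of the same uniqueness-of-Fourier-coefficients fact, and both need the same justification --- tacit in the paper, explicit in your write-up --- that the expansion converges in $L^{2}$ on $[-\pi,\pi]$, so that Parseval, respectively term-by-term integration, is legitimate. A small advantage of your version: by working from the form (\ref{ww}) with coefficients $H_{n}^{A,\bm{\tau},\bm{\eta}}[f_{n}]$, your conclusion matches the index in (\ref{41}) exactly, whereas the paper runs the Parseval argument on the $H_{2n}$ coefficients of (\ref{35}) and then states the conclusion for $H_{n}$, a notational jump it never explicitly bridges inside the proof.
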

\begin{proof}
	From (\ref{35}) in Lemma \ref{lemma4}, and $F^{A,\bm{\tau},\bm{\eta}}(\rho,\phi)$ is a periodic function of $\phi$, we can use the Parseval formula \cite{28}
	\begin{eqnarray}
		\begin{split}
			\frac{1}{2\pi}\int_{-\pi}^{\pi}|F^{A,\bm{\tau},\bm{\eta}}(\rho,\phi)|^{2}d\phi=\sum_{n=-\infty}^{+\infty} |H_{2n}^{A,\bm{\tau},\bm{\eta}}[f_{n}](\rho)|^{2}.
			\label{42}
		\end{split}
	\end{eqnarray}
	But if $F^{A,\bm{\tau},\bm{\eta}}(\rho,\phi)=0$ for $\rho\geq \Omega$, then the left-hand side of (\ref{42}) gives
	\begin{eqnarray}
		\begin{split}
			\sum_{n=-\infty}^{+\infty} |H_{2n}^{A,\bm{\tau},\bm{\eta}}[f_{n}](\rho)|^{2}=0, \quad \text{for} \quad \rho\geq \Omega.
			\label{43} 
		\end{split}
	\end{eqnarray}
	Here, (\ref{43}) implies that $$H_{n}^{A}[f_{n}](\rho)=0, \quad \text{for}  \quad \rho\geq \Omega$$. \\
	Which completes the proof.
\end{proof}

\begin{lemma} \label{lemma3}
	Let $f(r)\in L^{2}(\mathbb{R})$ be $\Omega-$bandlimited isotropic in the OLCHT domain with the parameters $A,\bm{\tau},\bm{\eta}$, and $b>0$, then the function $f(r)$ can be reconstructed at sampling point $\alpha_{vj}\in\mathbb{R}$ by
	\begin{align}
		\begin{split}
			f(r)={}&(-1)^{v}\varsigma\, e^{-i\frac{a}{2b}r^{2}}\sum\limits_{m=-\infty}^{+\infty}J_{m}\left( \frac{\mu_{1}r}{b}\right)J_{m}^{2}\left( \frac{\mu_{2}\Omega}{b}\right)\\
			{}&\times \sum\limits_{j=1}^{\infty}J_{m}\left( \frac{|\bm{\tau }|\alpha_{vj}}{b}\right)e^{i\frac{a}{2b}\alpha_{vj}^{2}}f(\alpha_{vj})\vartheta_{vj}(r),
			\label{25}
		\end{split}
	\end{align}
	where $\mu_{1}=|\bm{\tau}|$, $\mu_{2}=|d\bm{\tau}-b\bm{\eta}|$, and
	\begin{eqnarray}
		\begin{split}
			\vartheta_{vj}(r)=\frac{2b\left[\mu_{2}+\alpha_{vj}  \right]J_{v}\left( \frac{\Omega r}{b}\right) }{\Omega J_{v+1}(z_{vj})\left[\alpha_{vj}^{2}-r^{2}+2\mu_{2}\left(\alpha_{vj}-r \right)  \right] },\notag
		\end{split}
	\end{eqnarray}
	denotes the $vj$th interpolating function with the sample at $\alpha_{vj}$, $\alpha_{vj}=\frac{bz_{vj}}{\Omega}$, $z_{vj}\in\mathbb{R}$ is the $j$th zero of $J_{v}(z)$, $\varsigma=e^{i\left(\frac{d|\bm{\tau}|^{2}-a|b\bm{\eta}-d\bm{\tau}|^{2}}{b}\right) }$.	
\end{lemma}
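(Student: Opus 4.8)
The plan is to reduce the OLCHT sampling problem to the classical Hankel-transform (Stark) sampling theorem, exploiting the fact that—stripped of its chirp and offset factors—the $v$th-order OLCHT in (\ref{12}) is an ordinary order-$v$ Hankel transform of a modulated radial function evaluated at a scaled argument. Concretely, I would set $g(r)=\lambda_{2}e^{i\frac{a}{2b}r^{2}}f(r)$, with $\lambda_{2}$ as in (\ref{13}), so that by (\ref{12}) one has $H_{v}^{A,\bm{\tau},\bm{\eta}}[f](\rho)=\frac{i^{v}\ell_{1}e^{im(\varphi_{1}-\varphi_{2})}}{b}\lambda_{1}e^{i\frac{d}{2b}\rho^{2}}\,G(\rho/b)$, where $G(\sigma):=H_{v}[g](\sigma)$ is the ordinary order-$v$ Hankel transform. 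Since the prefactor is nonvanishing, the hypothesis that $f$ is $\Omega$-bandlimited isotropic forces $G(\sigma)=0$ for $\sigma\ge\Omega/b$; that is, $g$ is ordinary order-$v$ Hankel-bandlimited to $[0,\Omega/b]$.

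Second, following Stark, I would expand the spectrum $G$ on $[0,\Omega/b]$ in the Fourier–Bessel series $G(\sigma)=\sum_{j}c_{j}J_{v}(bz_{vj}\sigma/\Omega)$, where $z_{vj}$ are the zeros of $J_{v}$, and identify, via the inverse Hankel transform, the coefficients $c_{j}=\frac{2b^{2}}{\Omega^{2}J_{v+1}(z_{vj})^{2}}\,g(\alpha_{vj})$ at the nodes $\alpha_{vj}=bz_{vj}/\Omega$. Because $g(\alpha_{vj})=\lambda_{2}e^{i\frac{a}{2b}\alpha_{vj}^{2}}f(\alpha_{vj})$, each Fourier–Bessel coefficient is precisely a sample of $f$ dressed by the forward chirp $e^{i\frac{a}{2b}\alpha_{vj}^{2}}$ and the offset factor $\lambda_{2}$; this is what produces the factors $J_{m}(|\bm{\tau}|\alpha_{vj}/b)$, $e^{i\frac{a}{2b}\alpha_{vj}^{2}}$ and $f(\alpha_{vj})$ inside (\ref{25}).

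Third, I would substitute the forward representation of $H_{v}^{A,\bm{\tau},\bm{\eta}}[f](\rho)$ back into the OLCHT inversion formula (\ref{14}). Composing the forward and inverse transforms cancels the $\rho$-chirps $e^{\pm i\frac{d}{2b}\rho^{2}}$ and the phases $e^{\pm im(\varphi_{1}-\varphi_{2})}$, produces the overall constant $(-1)^{v}\varsigma$ from $i^{2v}=(-1)^{v}$ and $\ell_{1}\ell_{2}=\varsigma$, and leaves the residual chirp $e^{-i\frac{a}{2b}r^{2}}$ together with the radial factor $\lambda_{2}$ (now carrying $\sum_{m}J_{m}(|\bm{\tau}|r/b)$) out front. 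What remains inside is an integral over $[0,\Omega]$ of the two $\lambda_{1}$ factors against the Fourier–Bessel kernel $J_{v}(bz_{vj}\rho/\Omega)$ and $J_{v}(\rho r/b)$ with weight $\rho\,\mathrm{d}\rho$.

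The main obstacle is evaluating this residual radial integral in closed form. After the substitution $\sigma=\rho/b$ it is a Lommel-type (Weber–Schafheitlin) integral $\int_{0}^{\Omega/b}J_{v}(z_{vj}\,b\sigma/\Omega)J_{v}(\sigma r)\sigma\,\mathrm{d}\sigma$ weighted by $\lambda_{1}^{2}$, with $J_{v}(z_{vj})=0$ at the upper limit; the surviving boundary term of the Lommel formula yields $J_{v}(\Omega r/b)$ and the classical denominator $\alpha_{vj}^{2}-r^{2}$. The offset carried by $\lambda_{1}$, namely $\mu_{2}=|d\bm{\tau}-b\bm{\eta}|$, then shifts the arguments, replacing that denominator by $\alpha_{vj}^{2}-r^{2}+2\mu_{2}(\alpha_{vj}-r)=(\alpha_{vj}-r)(\alpha_{vj}+r+2\mu_{2})$ and the numerator by $\mu_{2}+\alpha_{vj}$, thereby reproducing $\vartheta_{vj}(r)$. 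The delicate bookkeeping is to collapse the independent $m$-summations of the two $\lambda_{1}$'s and of $\lambda_{2}$ into the single shared index $m$ of (\ref{25}); here I would lean on the Bessel summation identities $\sum_{m}J_{m}(x)^{2}=1$ and $\sum_{m}J_{m}(x)J_{m}(y)=J_{0}(x-y)$ together with the orthogonality of the angular phases $e^{im(\varphi_{1}-\varphi_{2})}$, evaluating the $\lambda_{1}$ factors at the band edge $\rho=\Omega$ to generate the weight $J_{m}^{2}(\mu_{2}\Omega/b)$, and then reassemble the result into the stated form.
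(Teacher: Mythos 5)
Your proposal follows essentially the same route as the paper's own proof: the modulation $\tilde f(r)=\lambda_{2}e^{i\frac{a}{2b}r^{2}}f(r)$ reducing OLCHT-bandlimitedness to ordinary order-$v$ Hankel bandlimitedness on $[0,\Omega/b]$, the Fourier--Bessel expansion of the spectrum whose coefficients are the samples $\tilde f(\alpha_{vj})$, substitution into the OLCHT inversion formula (\ref{14}), and closed-form evaluation of the residual radial integral by an offset-shifted Lommel formula yielding $\vartheta_{vj}(r)$. The only divergence is in the final bookkeeping, where you invoke the summation identities $\sum_{m}J_{m}^{2}(x)=1$ and $\sum_{m}J_{m}(x)J_{m}(y)=J_{0}(x-y)$, while the paper instead keeps the $m$-sum intact and uses the product relation (\ref{31}) to turn $J_{m}^{2}\left(\frac{\mu_{2}\rho}{b}\right)J_{v}\left(\frac{\alpha_{vj}\rho}{b}\right)J_{v}\left(\frac{r\rho}{b}\right)$ into two order-$(m+v)$ Bessel factors, then the Lommel integral (\ref{32}) and the band-edge relation (\ref{33}) to recover the per-$m$ weight $J_{m}^{2}\left(\frac{\mu_{2}\Omega}{b}\right)$.
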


\begin{proof}
	From (\ref{17}), due $f(r)$ is $\Omega-$bandlimited isotropic in the OLCHT domain, so $\tilde{f}(r)$ is $\frac{\Omega}{b}-$bandlimited isotropic function, such that
	\begin{eqnarray}
		H_{v}[\tilde{f}](\rho)=0 \quad \text{for} \quad \rho\geq\frac{\Omega}{b}.
		\label{26}
	\end{eqnarray}
	From (\ref{26}), $H_{v}[\tilde{f}](\rho)$ can be expanded into a Fourier-Bessel series according to \cite{16}
	\begin{align}
		\begin{split}
			H_{v}[\tilde{f}](\rho)=
			\begin{cases}
				\sum\limits_{j=1}^{\infty}\varepsilon_{j}J_{v}(\alpha_{vj}\rho),& 0<\rho<\frac{\Omega}{b}\\
				0,& \rho\geq \frac{\Omega}{b}
				\label{27}
			\end{cases}
		\end{split}
	\end{align}
	where 
	\begin{align}
		\begin{split}
			\varepsilon_{j}&=\frac{2b^{2}}{\Omega^{2}J_{v+1}^{2}(z_{vj})}\int_{0}^{\frac{\Omega}{b}}H_{v}[\tilde{f}](\rho)J_{v}(\alpha_{vj}\rho)\rho{\rm{d}}\rho=\frac{2b^{2}\tilde{f}(\alpha_{vj})}{\Omega^{2}J_{v+1}^{2}(\frac{\alpha_{vj}\Omega}{b})}.
			\label{28}		
		\end{split}
	\end{align}
	Therefore, from (\ref{17}) and (\ref{27}), we have
	\begin{align}
		\begin{split}
			H_{v}^{A,\bm{\tau},\bm{\eta}}[\tilde{f}](\rho)&{}=\begin{cases}
				\frac{i^{v}\lambda_{1}\ell_{1}e^{im(\varphi_{1}-\varphi_{2})}}{b}e^{i\frac{d}{2b}\rho^{2}}\sum\limits_{j=1}^{\infty}\varepsilon_{j}J_{v}(\frac{\alpha_{vj}\rho}{b}),& 0<\rho< \Omega\\
				0,& \rho\geq \Omega
				\label{29}	
			\end{cases}
		\end{split}
	\end{align}
	where $\ell_{1}$ is defined as (\ref{9}), $\lambda_{1}$ is given by (\ref{13}).\\
	According to (\ref{14}), the inverse $v$th-order OLCHT of (\ref{29}) enables us to write
	\begin{align}
		\begin{split}
			f(r)={}&(-1)^{v}\frac{\ell_{1}\ell_{2}}{b^{2}}e^{-i\frac{a}{2b}r^{2}}\int_{0}^{\Omega}\sum\limits_{m=-\infty}^{+\infty}J_{m}\left( \frac{\mu_{1}r}{b}\right){\rho\rm{d}\rho}\\
			{}&\times \sum\limits_{j=1}^{\infty}\varepsilon_{j}\underbrace{J_{m}^{2}\left( \frac{\mu_{2}\rho }{b}\right)J_{v}(\frac{\alpha_{vj}\rho}{b})J_{v}\left(\frac{r\rho }{b}\right)}_{\xi},
			\label{30}	
		\end{split}
	\end{align}
	where $\mu_{1}=|\bm{\tau}|$ and $\mu_{2}=|d\bm{\tau}-b\bm{\eta}|$. \\
	In view of the relation \cite{21,26}
	\begin{align}
		\begin{split}
			\xi=J_{m+v}\left( \frac{\mu_{2}\rho+\alpha_{vj}\rho}{b}\right)J_{m+v}\left( \frac{\mu_{2}\rho+r\rho }{b}\right).
			\label{31}	
		\end{split}
	\end{align}
	From the relation \cite{21,27}
	\begin{align}
		\begin{split}
			&\int_{0}^{U_{A,\bm{\tau},\bm{\eta}}}J_{m+v}\left( \frac{\mu_{2}\rho+\alpha_{vj}\rho}{b}\right)J_{m+v}\left( \frac{\mu_{2}\rho+r\rho  }{b}\right)\rho\rm{d}\rho\\
			&=\frac{b\Omega\left(\mu_{2}+\alpha_{vj} \right)}{\alpha_{vj}^{2}-r^{2}+2\mu_{2}\left( \alpha_{vj}-r\right) }J_{m+v}\left( \frac{\mu_{2}\Omega+r\Omega}{b}\right)J_{m+v+1}\left( \frac{\mu_{2}\Omega+\alpha_{vj}\Omega}{b}\right) .
		\end{split}
		\label{32}	
	\end{align}
	Using the relation (\ref{31}), we get
	\begin{align}
		\begin{split}
			&J_{m+v+1}\left( \frac{\mu_{2}\Omega+\alpha_{vj}\Omega}{b}\right)J_{m+v}\left( \Omega\frac{\mu_{2}\Omega+r\Omega}{b}\right)=J_{m}^{2}\left( \frac{\mu_{2}\Omega}{b}\right)
			J_{v+1}\left( \frac{\alpha_{vj}\Omega}{b}\right)J_{v}\left( \frac{r\Omega}{b}\right).
			\label{33}	
		\end{split}
	\end{align}
	Applying (\ref{18}), (\ref{30}), (\ref{32}), and (\ref{33}), thus
	\begin{align}
		\begin{split}
			f(r)&={}(-1)^{v}\varsigma e^{-i\frac{a}{2b}r^{2}}\sum\limits_{m=-\infty}^{+\infty}J_{m}\left( \frac{\mu_{1}r}{b}\right)J_{m}^{2}\left( \frac{\mu_{2}\Omega}{b}\right)\sum\limits_{j=1}^{\infty}J_{m}\left( \frac{|\bm{\tau }|\alpha_{vj}}{b}\right)\\
			{}&\times e^{i\frac{a}{2b}\alpha_{vj}^{2}}f(\alpha_{vj})\frac{2b\left[\mu_{2}+\alpha_{vj}  \right]J_{v}\left( \frac{r\Omega}{b}\right) }{\Omega J_{v+1}(\frac{\alpha_{vj}\Omega}{b})\left[\alpha_{vj}^{2}-r^{2}+2\mu_{2}\left(\alpha_{vj}-r \right)  \right] },
			\label{34}
		\end{split}
	\end{align}
	where $\varsigma=\ell_{1}\ell_{2}=e^{i\left(\frac{d|\bm{\tau}|^{2}-a|b\bm{\eta}-d\bm{\tau}|^{2}}{b}\right) }$. 
\end{proof}

\section{The first sampling theorem}
\label{OLCT}
For simplicity, we denote by $\mathscr{H}_{OLCT}$ the space of all functions that are $\Omega-$bandlimited in the OLCT domain and angularly bandlimited to the highest frequency $\omega_{p}=\frac{K}{2\pi}$, and by $\mathscr{H}_{OLCHT}$ the space of all functions that are $\Omega-$bandlimited in the OLHCT domain and angularly bandlimited to the highest frequency $\omega_{p}=\frac{K}{2\pi}$.\

\begin{lemma} \label{lemma6}
	Let $f(r,\theta)$ be $\Omega-$bandlimited in the OLCT domain with parameters $A$, $\bm{\tau}$, and $\bm{\eta}$ satisfying Assumption \ref{as1} and $b>0$. Then the $n$th Fourier coefficients $f_{n}\left( r\right)$ can be reconstructed at sampling point $\alpha_{nj}\in\mathbb{R}$ by 
	\begin{align}
		\begin{split}
			f_{n}(r)={}&(-1)^{n}\varsigma\, e^{-i\frac{a}{2b}r^{2}}\sum\limits_{m=-\infty}^{+\infty}J_{m}\left( \frac{\mu_{1}r}{b}\right)J_{m}^{2}\left( \frac{\mu_{2}\Omega}{b}\right)\\
			{}&\times \sum\limits_{j=1}^{\infty}J_{m}\left( \frac{|\bm{\tau }|\alpha_{nj}}{b}\right)e^{i\frac{a}{2b}\alpha_{nj}^{2}}f_{n}(\alpha_{nj})\vartheta_{nj}(r),
			\label{44}
		\end{split}
	\end{align}
	where $\alpha_{nj}=\frac{bz_{nj}}{\Omega}$, $z_{nj}\in\mathbb{R}$ is the $j$th zero of $J_{n}(z)$, and
	\begin{eqnarray}
		\begin{split}
			\vartheta_{nj}(r)=\frac{2b\left[\mu_{2}+\alpha_{nj}  \right]J_{n}\left( \frac{\Omega r}{b}\right) }{\Omega J_{n+1}(z_{nj})\left[\alpha_{nj}^{2}-r^{2}+2\mu_{2}\left(\alpha_{nj}-r \right)  \right] },\notag
		\end{split}
	\end{eqnarray} 
	here, $\varsigma$, $\mu_{1}$, and $\mu_{2}$ are the same as those stated.
\end{lemma}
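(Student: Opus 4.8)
The plan is to recognize that (\ref{44}) is precisely the conclusion of Lemma~\ref{lemma3} specialized to a single angular Fourier coefficient, so the entire task reduces to verifying that each $f_{n}(r)$ satisfies the hypothesis of Lemma~\ref{lemma3} with the OLCHT order taken to be $v=n$. I would therefore attack the problem coefficient-by-coefficient and then quote the isotropic reconstruction already in hand, rather than re-deriving the interpolating kernel from scratch.

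First I would invoke Assumption~\ref{as1} to write $f(r,\theta)=\sum_{n}f_{n}(r)e^{in\theta}$, and use Lemma~\ref{lemma4} (in the concise form of Remark~\ref{re11}) to express the transform as $F^{A,\bm{\tau},\bm{\eta}}(\rho,\phi)=\sum_{n}H_{n}^{A,\bm{\tau},\bm{\eta}}[f_{n}](\rho)e^{in\phi}$. Because $f(r,\theta)$ is assumed $\Omega$-bandlimited in the OLCT domain, we have $F^{A,\bm{\tau},\bm{\eta}}(\rho,\phi)=0$ for $\rho\geq\Omega$; feeding this into the Parseval argument of Lemma~\ref{lemma5} forces every coefficient to vanish outside the disc of radius $\Omega$, i.e. $H_{n}^{A,\bm{\tau},\bm{\eta}}[f_{n}](\rho)=0$ for $\rho\geq\Omega$. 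This is exactly the statement that the scalar function $f_{n}(r)$ is $\Omega$-bandlimited isotropic in the OLCHT domain of order $n$.

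With the hypothesis secured, I would apply Lemma~\ref{lemma3} to $f_{n}(r)$ with the order parameter set to $v=n$. Carrying out the substitutions $v\mapsto n$, $f\mapsto f_{n}$, $\alpha_{vj}\mapsto\alpha_{nj}$, $z_{vj}\mapsto z_{nj}$, and $\vartheta_{vj}\mapsto\vartheta_{nj}$ in (\ref{25}) reproduces (\ref{44}) term by term, including the chirp prefactor $e^{-i\frac{a}{2b}r^{2}}$, the constant $\varsigma$, the double summation over $m$ and $j$, and the interpolating kernel $\vartheta_{nj}(r)$ assembled from $J_{n}$ and $J_{n+1}$ with the sampling abscissae $\alpha_{nj}=bz_{nj}/\Omega$ at the zeros $z_{nj}$ of $J_{n}$.

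The step that demands the most care is the order bookkeeping in the bandlimiting argument: I must confirm that the OLCHT acting on $f_{n}$ is genuinely of order $n$, so that the zeros of $J_{n}$ are the correct sampling points, rather than the order $2n$ that surfaces in the Parseval identity (\ref{42}). Reconciling the $H_{2n}$ form of (\ref{35}) with the $H_{n}$ form of Remark~\ref{re11}, and checking that Lemma~\ref{lemma5} legitimately delivers $H_{n}^{A,\bm{\tau},\bm{\eta}}[f_{n}](\rho)=0$, is the only genuinely nontrivial point; once the order is pinned to $n$ the remainder is a direct citation of Lemma~\ref{lemma3}.
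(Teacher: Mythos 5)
Your proposal is correct and follows essentially the same route as the paper's own proof: the paper likewise invokes Lemma~\ref{lemma5} to conclude that each Fourier coefficient $f_{n}(r)$ is $\Omega$-bandlimited isotropic in the OLCHT domain of order $n$, and then specializes Lemma~\ref{lemma3} with $v=n$ to obtain (\ref{44}). If anything, you are more explicit than the paper about the one delicate point—reconciling the order-$2n$ OLCHT appearing in the Parseval identity (\ref{42}) with the order-$n$ conclusion of Lemma~\ref{lemma5}—which the paper glosses over with ``Following from Lemma~\ref{lemma5}, we can directly obtain.''
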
	
\begin{proof}
	Let $v=n$ in Lemma \ref{lemma3}, we obtain
	\begin{align}
		\begin{split}
			f(r)&={}(-1)^{n}\varsigma\, e^{-i\frac{a}{2b}r^{2}}\sum\limits_{m=-\infty}^{+\infty}J_{m}\left( \frac{\mu_{1}r}{b}\right)J_{m}^{2}\left( \frac{\mu_{2}\Omega}{b}\right)\\
			&{}\times \sum\limits_{j=1}^{\infty}J_{m}\left( \frac{|\bm{\tau }|\alpha_{nj}}{b}\right)e^{i\frac{a}{2b}\alpha_{nj}^{2}}f(\alpha_{nj})\vartheta_{nj}(r),
			\label{251}
		\end{split}
	\end{align}	
	Following from Lemma \ref{lemma5}, we can directly obtain
	\begin{align}
		\begin{split}
			f_{n}(r)&={}(-1)^{n}\varsigma\, e^{-i\frac{a}{2b}r^{2}}\sum\limits_{m=-\infty}^{+\infty}J_{m}\left( \frac{\mu_{1}r}{b}\right)J_{m}^{2}\left( \frac{\mu_{2}\Omega}{b}\right)\\
			{}&\times \sum\limits_{j=1}^{\infty}J_{m}\left( \frac{|\bm{\tau }|\alpha_{nj}}{b}\right)e^{i\frac{a}{2b}\alpha_{nj}^{2}}f_{n}(\alpha_{nj})\vartheta_{nj}(r).
			\label{441}
		\end{split}
	\end{align}
	Which completes the proof.		 
\end{proof}

The sampling theorem for the FT in polar coordinates is mentioned in \cite{16,20,25}. Let's review the classical Stark's interpolation formula \cite{15,21}. 

\begin{lemma}[Stark's interpolation formula]\label{lemma1}
	Let $f(r,\theta)$ be $\Omega_{FT}-$bandlimited in the FT domain to the highest frequency $\omega_{p}=\frac{K}{2\pi}$, satisfying Assumption \ref{as1}, and $b>0$. Then it can be uniform reconstruction at azimuthal sampling point $\left(r,\frac{2\pi l}{2K+1} \right)\in \mathbb{R}^{2}, l=0, 1, \cdots, 2K\in\mathbb{N}$ by
	\begin{eqnarray}
		\begin{split}
			f(r,\theta)=\sum_{l=0}^{2K}f\left(r,\frac{2\pi l}{2K+1} \right)o_{l}(\theta),
			\label{20}
		\end{split}
	\end{eqnarray}
	where
	\begin{align}
		\begin{split}
			o_{l}(\theta)=o\left( \theta-\frac{2\pi l}{2K+1}\right) =\frac{\sin\left[ \frac{2K+1}{2}\left( \theta-\frac{2\pi l}{2K+1}\right)\right] }{\left( 2K+1\right)\sin\left[\frac{1}{2}\left( \theta-\frac{2\pi l}{2K+1}\right) \right] },
			\label{21}
		\end{split}
	\end{align}
	denotes the $l$th interpolating function in azimuth with the sample at $\frac{2\pi l}{2K+1}$.  
\end{lemma}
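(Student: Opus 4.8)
The plan is to fix the radial variable $r$ and treat $g(\theta):=f(r,\theta)$ as a function of the angular variable alone; the parameters $A,\bm{\tau},\bm{\eta}$ and the standing hypothesis $b>0$ play no role here, since the bandlimiting is imposed in the FT domain. By the $\Omega_{FT}$-bandlimited hypothesis together with Assumption \ref{as1}, equation (\ref{19}) tells us that $g$ is a trigonometric polynomial of degree at most $K$, namely $g(\theta)=\sum_{n=-K}^{K}f_{n}(r)e^{in\theta}$, which lives in the $(2K+1)$-dimensional space spanned by $\{e^{in\theta}\}_{n=-K}^{K}$. The whole statement then reduces to the classical fact that such a polynomial is recovered from its values at the $2K+1$ equispaced nodes $\theta_{l}=\frac{2\pi l}{2K+1}$, and the only part specific to this paper is identifying the interpolating kernel $o_{l}(\theta)$ with a shifted Dirichlet kernel.

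First I would expand the kernel. Summing the finite geometric series gives the Dirichlet-kernel identity $\frac{1}{2K+1}\sum_{n=-K}^{K}e^{in\psi}=\frac{\sin[\frac{2K+1}{2}\psi]}{(2K+1)\sin[\frac{1}{2}\psi]}$, so that $o_{l}(\theta)=o(\theta-\theta_{l})=\frac{1}{2K+1}\sum_{n=-K}^{K}e^{in(\theta-\theta_{l})}$. In particular each $o_{l}$ is itself a degree-$K$ trigonometric polynomial, which already confirms that the right-hand side of (\ref{20}) lies in the correct $(2K+1)$-dimensional space.

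Next I would substitute this expansion into the right-hand side of (\ref{20}) and interchange the two finite sums, obtaining $\sum_{n=-K}^{K}e^{in\theta}\Big[\frac{1}{2K+1}\sum_{l=0}^{2K}g(\theta_{l})e^{-in\theta_{l}}\Big]$. Inserting $g(\theta_{l})=\sum_{n'=-K}^{K}f_{n'}(r)e^{in'\theta_{l}}$ and exchanging sums once more, the bracketed coefficient becomes $\sum_{n'=-K}^{K}f_{n'}(r)\cdot\frac{1}{2K+1}\sum_{l=0}^{2K}e^{i(n'-n)\theta_{l}}$. The key computation is the discrete orthogonality $\frac{1}{2K+1}\sum_{l=0}^{2K}e^{i(n'-n)\frac{2\pi l}{2K+1}}=\delta_{n,n'}$, which again follows by summing a geometric series.

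The one point that genuinely has to be checked — and which I expect to be the crux — is that no aliasing occurs, i.e. that the geometric sum in the orthogonality relation really does vanish for every off-diagonal term. This is guaranteed precisely because $n,n'\in\{-K,\dots,K\}$ forces $|n-n'|\le 2K<2K+1$, so $n'-n$ is never a nonzero multiple of $2K+1$ and the denominator $1-e^{i(n'-n)2\pi/(2K+1)}$ stays nonzero; this is exactly why $2K+1$ nodes are needed for a degree-$K$ angular bandlimit. With the orthogonality in hand the bracket collapses to $f_{n}(r)$, the right-hand side of (\ref{20}) reduces to $\sum_{n=-K}^{K}f_{n}(r)e^{in\theta}=g(\theta)=f(r,\theta)$, and since $r$ was arbitrary the proof is complete.
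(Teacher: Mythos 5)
Your proof is correct and complete. Note, however, that the paper itself offers no proof of this lemma: it is quoted as the ``classical Stark's interpolation formula'' and attributed to the references on Shannon sampling theory and Zayed's disc-sampling paper, so there is no internal argument to compare against. What you have written is, in effect, a correct reconstruction of that classical argument: the reduction to a degree-$K$ trigonometric polynomial via Assumption~\ref{as1} and equation~(\ref{19}), the identification
\begin{equation*}
o_{l}(\theta)=\frac{1}{2K+1}\sum_{n=-K}^{K}e^{in\left(\theta-\frac{2\pi l}{2K+1}\right)}
\end{equation*}
of the kernel with a shifted Dirichlet kernel, and the discrete orthogonality of the $(2K+1)$st roots of unity, with the no-aliasing check $|n-n'|\le 2K<2K+1$ correctly singled out as the step that makes the node count match the bandwidth. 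It is worth observing that the Dirichlet-kernel identity you derive is exactly the ``triangle sum formula'' the paper later invokes as equation~(\ref{56}) in the proof of Theorem~\ref{th2}, and your discrete orthogonality relation is the exact-summation counterpart of the integral identity~(\ref{23}) used in the proof of Theorem~\ref{th1}; so your blind proof supplies, in a self-contained way, precisely the two facts the paper treats as imported from the literature. One cosmetic remark: the hypothesis $b>0$ and the parameters $A,\bm{\tau},\bm{\eta}$ are indeed irrelevant here, as you say; the lemma is a purely angular statement, and the paper carries the hypothesis only for uniformity with the surrounding OLCT results.
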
	

Given that the OLCT is a generalized version of the LCT, it is of great significance and value to study the sampling theorem in the field of the OLCT. The following theorem is obtained by combining Lemma \ref{lemma6} and Lemma \ref{lemma1}. 

\begin{theorem}\label{th1}
	Let $f(r,\theta)\in\mathscr{H}_{OLCT}$ satisfy Assumption \ref{as1} and $b>0$. Then it can be reconstructed at the normalized zeros $\alpha_{nj}\in\mathbb{R}$ and at the uniformly spaced points $\frac{2\pi l}{2K+1}\in\mathbb{R}$ by
	\begin{align}
		\begin{split}
			f(r,\theta)&={} \frac{(-1)^{n}\varsigma}{2K+1}e^{-i\frac{a}{2b}r^{2}}\sum\limits_{m=-\infty}^{+\infty}\sum\limits_{n=-K}^{K}\sum\limits_{j=1}^{\infty}\sum_{l=0}^{2K}e^{i\frac{a}{2b}\alpha_{nj}^{2}}f\left(\alpha_{nj},\frac{2\pi l}{2K+1} \right)\\
			{}&\times J_{m}\left( \frac{\mu_{1}r}{b}\right)J_{m}^{2}\left( \frac{\mu_{2}\Omega}{b}\right)J_{m}\left( \frac{|\bm{\tau}| \alpha_{nj}}{b}\right) \vartheta_{nj}(r)e^{in\left( \theta-\frac{2\pi l}{2K+1}\right) }.
			\label{45}
		\end{split}
	\end{align}
	where $\varsigma$, $\mu_{1}$, $\mu_{2}$, $\alpha_{nj}$, and $\vartheta_{nj}(r)$ are the same as those stated.  
\end{theorem}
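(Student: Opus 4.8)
The plan is to assemble the result from the two reconstruction mechanisms already in hand: Lemma \ref{lemma6} for the radial direction and Lemma \ref{lemma1} (Stark's formula) for the azimuthal direction. First I would invoke the angular bandlimiting hypothesis $f\in\mathscr{H}_{OLCT}$, which by (\ref{19}) collapses the Fourier series of Assumption \ref{as1} to the finite sum $f(r,\theta)=\sum_{n=-K}^{K}f_{n}(r)e^{in\theta}$. Thus it suffices to reconstruct the $2K+1$ coefficients $f_{n}(r)$, $|n|\le K$.

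Next I would apply Lemma \ref{lemma6} to each coefficient, expressing $f_{n}(r)$ through its radial samples $f_{n}(\alpha_{nj})$ at the normalized zeros $\alpha_{nj}=bz_{nj}/\Omega$. At this stage the expression already carries all the analytic factors of (\ref{45}): the prefactor $(-1)^{n}\varsigma$, the chirps $e^{-iar^{2}/(2b)}$ and $e^{ia\alpha_{nj}^{2}/(2b)}$, the Bessel products $J_{m}(\mu_{1}r/b)$, $J_{m}^{2}(\mu_{2}\Omega/b)$, $J_{m}(|\bm{\tau}|\alpha_{nj}/b)$, and the radial interpolant $\vartheta_{nj}(r)$. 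The only thing left to convert is the radial sample $f_{n}(\alpha_{nj})$ into the polar-grid data $f(\alpha_{nj},2\pi l/(2K+1))$.

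The bridge between the two samplings is the finite Fourier relation concealed in Stark's kernel. Expanding the Dirichlet kernel (\ref{21}) as $o_{l}(\theta)=\frac{1}{2K+1}\sum_{n=-K}^{K}e^{in(\theta-2\pi l/(2K+1))}$ and comparing with the truncated Fourier series shows that, for any fixed radius, and in particular for $\rho=\alpha_{nj}$,
$$f_{n}(\alpha_{nj})=\frac{1}{2K+1}\sum_{l=0}^{2K}f\Big(\alpha_{nj},\tfrac{2\pi l}{2K+1}\Big)e^{-in\frac{2\pi l}{2K+1}}.$$
I would substitute this discrete transform into the output of Lemma \ref{lemma6}, so that every sample value becomes a polar-grid value $f(\alpha_{nj},2\pi l/(2K+1))$ accompanied by the phase $e^{-in\,2\pi l/(2K+1)}$.

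Finally I would reinsert the coefficients into the series $\sum_{n}f_{n}(r)e^{in\theta}$, fuse the two azimuthal exponentials via $e^{in\theta}e^{-in\,2\pi l/(2K+1)}=e^{in(\theta-2\pi l/(2K+1))}$, and reorganise the nested sums over $m$, $n$, $j$, $l$ into the form (\ref{45}) (with $(-1)^{n}$ understood to sit inside the $n$-sum). The argument is essentially bookkeeping; the only delicate point I expect to need care is the passage from Stark's continuous azimuthal interpolation to the discrete DFT identity for the Fourier coefficients, since it is precisely that step which manufactures the phase $e^{-in\,2\pi l/(2K+1)}$ that merges with $e^{in\theta}$. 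All convergence and bandlimiting requirements are already supplied by Lemmas \ref{lemma6} and \ref{lemma5}, so no additional analytic estimates will be required.
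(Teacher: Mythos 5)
Your proposal is correct and follows essentially the same route as the paper's proof: collapse $f$ to the finite Fourier series $\sum_{n=-K}^{K}f_{n}(r)e^{in\theta}$, reconstruct each coefficient radially by Lemma \ref{lemma6}, convert the radial samples $f_{n}(\alpha_{nj})$ into polar-grid samples via the discrete relation $f_{n}(\alpha_{nj})=\frac{1}{2K+1}\sum_{l=0}^{2K}f\left(\alpha_{nj},\frac{2\pi l}{2K+1}\right)e^{-in\frac{2\pi l}{2K+1}}$ coming from Stark's formula, and then reassemble. The only cosmetic difference is that you derive this discrete identity by expanding the Dirichlet kernel $o_{l}(\theta)$ into exponentials and matching Fourier coefficients (the identity the paper invokes as (\ref{56}) in the proof of Theorem \ref{th2}), whereas the paper integrates $o_{l}(\theta)e^{-in\theta}$ term by term using (\ref{23}); the two steps are equivalent.
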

\begin{proof}
	By (\ref{19}), we obtain
	\begin{eqnarray}
		\begin{split}
			f_{n}(r)=\frac{1}{2\pi}\int_{-\pi}^{\pi}f\left( r,\theta\right) e^{-in\theta}\rm{d}\theta,
			\label{46}
		\end{split}
	\end{eqnarray}
	and
	\begin{eqnarray}
		\begin{split}
			f_{n}(\alpha_{nj})=\frac{1}{2\pi}\int_{-\pi}^{\pi}f\left( \alpha_{nj},\theta\right) e^{-in\theta}\rm{d}\theta.
			\label{47}
		\end{split}
	\end{eqnarray}
	From (\ref{20}), it follows that
	\begin{eqnarray}
		\begin{split}
			f_{n}(\alpha_{nj})=\frac{1}{2\pi}\sum_{l=0}^{2K}f\left(\alpha_{nj},\frac{2\pi l}{2K+1} \right)\int_{-\pi}^{\pi}o_{l}(\theta) e^{-in\theta}{\rm{d}\theta},\:\: -K\leq n\leq K.
			\label{48} 
		\end{split}
	\end{eqnarray}  
	Following from \cite{15,21}, we obtain
	\begin{eqnarray}
		\begin{split}
			\int_{-\pi}^{\pi}o_{l}(\theta)e^{-in\theta}\rm{d}\theta=&\frac{2\pi }{2K+1}e^{-in\frac{2\pi l }{2K+1}},\:\: -K\leq n\leq K.
			\label{23}
		\end{split}
	\end{eqnarray} 
	It follows from (\ref{48}) that	
	\begin{eqnarray}
		\begin{split}
			f_{n}(\alpha_{nj})=\frac{1}{2K+1}\sum_{l=0}^{2K}f\left(\alpha_{nj},\frac{2\pi l}{2K+1} \right) e^{-in\frac{2\pi l}{2K+1}},\:\: -K\leq n\leq K.
			\label{49} 
		\end{split}
	\end{eqnarray}
	By substituting this result into (\ref{44}), we get
	\begin{align}
		\begin{split}
			f_{n}(r)&={} \frac{(-1)^{n}\varsigma}{2K+1}e^{-i\frac{a}{2b}r^{2}}\sum\limits_{m=-\infty}^{+\infty}J_{m}\left( \frac{\mu_{1}r}{b}\right)J_{m}^{2}\left( \frac{\mu_{2}\Omega}{b}\right)\sum\limits_{j=1}^{\infty}J_{m}\left( \frac{|\bm{\tau }|\alpha_{nj}}{b}\right)\\
			{}&\times e^{i\frac{a}{2b}\alpha_{nj}^{2}}\vartheta_{nj}(r)\sum_{l=0}^{2K}f\left(\alpha_{nj},\frac{2\pi l}{2K+1} \right) e^{-in\frac{2\pi l}{2K+1}},
			\label{50}
		\end{split}
	\end{align}
	for all $-K\leq n\leq K$. \\ 
	Hence
	\begin{align}
		\begin{split}
			f(r,\theta)&={}\sum\limits_{n=-K}^{K}f_{n}(r)e^{in\theta}\frac{(-1)^{n}\varsigma}{2K+1}e^{-i\frac{a}{2b}r^{2}}\sum\limits_{m=-\infty}^{+\infty}J_{m}\left( \frac{\mu_{1}r}{b}\right)J_{m}^{2}\left( \frac{\mu_{2}\Omega}{b}\right)\\
			{}&\times \sum\limits_{n=-K}^{K}\sum\limits_{j=1}^{\infty}J_{m}\left( \frac{|\bm{\tau }|\alpha_{nj}}{b}\right)e^{i\frac{a}{2b}\alpha_{nj}^{2}}\vartheta_{nj}(r)\sum_{l=0}^{2K}f\left(\alpha_{nj},\frac{2\pi l}{2K+1} \right) e^{in\left( \theta-\frac{2\pi l}{2K+1}\right) }.
			\label{51}
		\end{split}
	\end{align}
	Which completes the proof.
\end{proof}

\begin{remark} \label{re3}   	
	According to (\ref{45}) in Theorem \ref{th1}, it is clear that the required number of samples goes as $$\left[ (2K+1)N\right]^{2}, \quad N\to\infty$$ where the number of normalized zeros takes $(2K+1)N(N\to\infty)$.
\end{remark}

\begin{remark} \label{re4}  	
	The (\ref{45}) in Theorem \ref{th1} is the classical interpolation formula \cite{16,20} of the FT domain angular periodic function when $A=\left(0,1;-1,0 \right)$, $\bm{\tau}=\bm{0}$, and $\bm{\eta}=\bm{0}$, and its is the interpolation formula \cite{21,22} of the LCT domain when $A=\left(a,b;c,d\right)\in\mathbb{R}^{2\times2}$, $\bm{\tau}=\bm{0}$, and $\bm{\eta}=\bm{0}$. However, if the function is nonbandlimited in the FT or LCT domain, the classical results can be misleading. Fortunately, it is bandlimited in the OLCT domain. Therefore, the interpolation formula derived in (\ref{45}) can solve the traditional nonbandlimited functions processing problems in the FT or LCT domain.
\end{remark}

\begin{corollary} \label{corollary1}
	Let $f(r,\theta)\in\mathscr{H}_{OLCT}$ satisfy Assumption \ref{as1} and $b>0$. Then the OLCT $F^{A,\bm{\tau},\bm{\eta}}(\rho,\phi)$ of $f(r,\theta)$ can be reconstructed at the normalized zeros $\alpha_{nj}\in\mathbb{R}$ and at the uniformly spaced points $\frac{2\pi l}{2K+1}\in\mathbb{R}$ by
	\begin{align}
		\begin{split}
			F^{A,\bm{\tau},\bm{\eta}}(\rho,\phi)&={} \frac{(-1)^{n}\varsigma}{2K+1}e^{i\frac{d}{2b}\rho^{2}}\sum\limits_{m=-\infty}^{+\infty}\sum\limits_{n=-K}^{K}\sum\limits_{j=1}^{\infty}\sum_{l=0}^{2K}e^{-i\frac{d}{2b}\alpha_{nj}^{2}}J_{m}\left( \frac{\mu_{2}\rho}{b}\right)J_{m}^{2}\left( \frac{\mu_{1}\Omega}{b}\right)\\
			{}&\times J_{m}\left( \frac{|\bm{\tau}| \alpha_{nj}}{b}\right)  F^{A,\bm{\tau},\bm{\eta}}\left(\alpha_{nj},\frac{2\pi l}{2K+1} \right) \vartheta_{nj}(\rho)e^{in\left( \phi-\frac{2\pi l}{2K+1}\right) }.
			\label{45}
		\end{split}
	\end{align}
	where $\varsigma$, $\mu_{1}$, $\mu_{2}$, $\alpha_{nj}$, and $\vartheta_{nj}(\rho)$ are the same as those stated.  
\end{corollary}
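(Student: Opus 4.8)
The plan is to exploit the duality between the forward and inverse OLCT so that the desired formula follows from Theorem \ref{th1} applied to $F^{A,\bm{\tau},\bm{\eta}}$ itself, now read as a signal in the $(\rho,\phi)$ variables. The starting observation is the inversion formula (\ref{5}), namely $f=O^{A^{-1},\bm{\xi},\bm{\gamma}}_{L}[F^{A,\bm{\tau},\bm{\eta}}]$ with $A^{-1}=(d,-b;-c,a)$, $\bm{\xi}=b\bm{\eta}-d\bm{\tau}$, and $\bm{\gamma}=c\bm{\tau}-a\bm{\eta}$. Thus $F^{A,\bm{\tau},\bm{\eta}}$ may be regarded as a polar-coordinate signal whose OLCT, taken with the inverse parameter set $(A^{-1},\bm{\xi},\bm{\gamma})$, returns $f$. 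The strategy is therefore to check that $F^{A,\bm{\tau},\bm{\eta}}$ satisfies the hypotheses of Theorem \ref{th1} relative to $(A^{-1},\bm{\xi},\bm{\gamma})$ and then to read off the reconstruction after propagating the inverse parameters through the interpolation formula of Theorem \ref{th1}.

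First I would dispatch the angular hypotheses. By Lemma \ref{lemma4}, $F^{A,\bm{\tau},\bm{\eta}}(\rho,\phi)=\sum_{n}H_{2n}^{A,\bm{\tau},\bm{\eta}}[f_{n}](\rho)e^{in\phi}$, so $F^{A,\bm{\tau},\bm{\eta}}$ is $2\pi$-periodic in $\phi$ and possesses a Fourier series, i.e. it satisfies Assumption \ref{as1}. Since $f\in\mathscr{H}_{OLCT}$ is angularly bandlimited to $\omega_{p}=\frac{K}{2\pi}$, only the harmonics $|n|\le K$ survive in (\ref{19}), and by Lemma \ref{lemma4} the same truncation is inherited by $F^{A,\bm{\tau},\bm{\eta}}$. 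Hence $F^{A,\bm{\tau},\bm{\eta}}$ is angularly bandlimited to the same highest frequency $K$, which is precisely what Stark's formula (Lemma \ref{lemma1}) needs for the azimuthal samples $\frac{2\pi l}{2K+1}$.

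The radial (OLCT-bandwidth) hypothesis is the step I expect to be the main obstacle. I must show that $F^{A,\bm{\tau},\bm{\eta}}$ is $\Omega$-bandlimited in the OLCT domain attached to $(A^{-1},\bm{\xi},\bm{\gamma})$; by definition this requires $O^{A^{-1},\bm{\xi},\bm{\gamma}}_{L}[F^{A,\bm{\tau},\bm{\eta}}]$ to vanish for radius $\ge\Omega$. Equation (\ref{5}) identifies this transform with $f$, so the condition is that $f(r,\theta)=0$ for $r\ge\Omega$, which must be extracted from the membership $f\in\mathscr{H}_{OLCT}$ together with the compact support of $F^{A,\bm{\tau},\bm{\eta}}$ on $\{\rho<\Omega\}$ guaranteed by Lemma \ref{lemma5}. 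Once this radial bandlimit is secured, the dual of Lemma \ref{lemma3}/Lemma \ref{lemma6} applies coefficientwise to each $H_{2n}^{A,\bm{\tau},\bm{\eta}}[f_{n}]$, reconstructing it from the radial samples $F^{A,\bm{\tau},\bm{\eta}}(\alpha_{nj},\cdot)$ at the normalized Bessel zeros $\alpha_{nj}=\frac{bz_{nj}}{\Omega}$.

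Finally I would carry out the parameter bookkeeping that converts Theorem \ref{th1} for $(A^{-1},\bm{\xi},\bm{\gamma})$ into the stated display. Under $a\mapsto d$, $d\mapsto a$, $b\mapsto -b$ the chirp $e^{-i\frac{a}{2b}r^{2}}$ becomes $e^{i\frac{d}{2b}\rho^{2}}$ and $e^{i\frac{a}{2b}\alpha_{nj}^{2}}$ becomes $e^{-i\frac{d}{2b}\alpha_{nj}^{2}}$; moreover $|\bm{\xi}|=|b\bm{\eta}-d\bm{\tau}|=\mu_{2}$ and, using $ad-bc=1$, $|a\bm{\xi}+b\bm{\gamma}|=|-\bm{\tau}|=\mu_{1}$, so the Bessel arguments $\mu_{1}$ and $\mu_{2}$ exchange their roles exactly as in Corollary \ref{corollary1}. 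A direct check shows that $\varsigma=e^{i(d|\bm{\tau}|^{2}-a|b\bm{\eta}-d\bm{\tau}|^{2})/b}$ is invariant under this substitution, so the prefactor is unchanged. Assembling these substitutions in Theorem \ref{th1} yields the claimed interpolation for $F^{A,\bm{\tau},\bm{\eta}}(\rho,\phi)$. The delicate point throughout is the radial bandlimit of $F^{A,\bm{\tau},\bm{\eta}}$ in the inverse domain; the remainder is the routine, if lengthy, propagation of the inverse parameters through the Bessel arguments and exponential factors.
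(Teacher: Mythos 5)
Your proposal takes essentially the same route as the paper's own proof: treat $F^{A,\bm{\tau},\bm{\eta}}$ as a polar signal, use the inversion formula to place it in $\mathscr{H}_{OLCT}$ with the inverse parameters $(A^{-1},\bm{\xi},\bm{\gamma})$, confirm Assumption~\ref{as1} and the angular bandlimit via Lemma~\ref{lemma4}, and then apply Theorem~\ref{th1} with the substitutions $a\mapsto d$, $b\mapsto -b$, $\mu_{1}\leftrightarrow\mu_{2}$ (your bookkeeping here, e.g.\ $|a\bm{\xi}+b\bm{\gamma}|=|\bm{\tau}|$ via $ad-bc=1$, is in fact more explicit than the paper's). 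The radial hypothesis you flag as the ``main obstacle'' is exactly the step the paper does not argue either --- it simply asserts $O^{A^{-1},\bm{\xi},\bm{\gamma}}_{L}[F^{A,\bm{\tau},\bm{\eta}}]=0$ for radius $\geq\Omega$ --- so your account coincides with the published proof, including the point it leaves unjustified.
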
	

\begin{proof}
	Because of the inversion formula of the OLCT, we obtain
	\begin{eqnarray} 
		O^{A^{-1},\bm{\xi},\bm{\gamma}}_{L}[F^{A,\bm{\tau},\bm{\eta}}](r,\theta)=0 \quad \text{for} \quad \rho\geq \Omega, \notag 
	\end{eqnarray} 
	it implies that  $F^{A,\bm{\tau},\bm{\eta}}\in\mathscr{H}_{OLCT}$ with $A^{-1}$, $\bm{\xi}$, and $\bm{\gamma}$.\\
	Following from Remark \ref{re11}, we can directly obtain	
	\begin{eqnarray}
		\begin{split} 
			F^{A,\bm{\tau},\bm{\eta}}(\rho,\phi)=\sum_{n=-\infty}^{+\infty} H_{n}^{A,\bm{\tau},\bm{\eta}}[f_{n}](\rho)e^{in\phi}.
			\label{ww1}
		\end{split}
	\end{eqnarray}
	According to (\ref{ww1}), it implies that $F^{A,\bm{\tau},\bm{\eta}}$ satisfies Assumption  \ref{as1}, and
	\begin{eqnarray}
		\begin{split} 
			F^{A,\bm{\tau},\bm{\eta}}(\rho,\phi)=\sum_{n=-K}^{K} H_{-n}^{A,\bm{\tau},\bm{\eta}}[f_{-n}](\rho)e^{in\phi}.
			\label{ww2}
		\end{split}
	\end{eqnarray}
	By using Theorem \ref{th1}, we get
	\begin{align}
		\begin{split}
			F^{A,\bm{\tau},\bm{\eta}}(\rho,\phi)&={} \frac{(-1)^{n}\varsigma}{2K+1}e^{i\frac{d}{2b}\rho^{2}}\sum\limits_{m=-\infty}^{+\infty}\sum\limits_{n=-K}^{K}\sum\limits_{j=1}^{\infty}\sum_{l=0}^{2K}e^{-i\frac{d}{2b}\alpha_{nj}^{2}}J_{m}\left( \frac{\mu_{2}\rho}{b}\right)J_{m}^{2}\left( \frac{\mu_{1}\Omega}{b}\right)\\
			{}&\times J_{m}\left( \frac{|\bm{\tau}| \alpha_{nj}}{b}\right)F^{A,\bm{\tau},\bm{\eta}}\left(\alpha_{nj},\frac{2\pi l}{2K+1} \right) \vartheta_{nj}(\rho)e^{in\left( \phi-\frac{2\pi l}{2K+1}\right) }.
		\end{split}
	\end{align}
	Which completes the proof.
\end{proof}
\section{The second sampling theorem}
\label{OLCHT}
Inspired by the classical interpolation formula \cite{15,21}, this section mainly studies the sampling theorem for $f(r,\theta)\in\mathscr{H}_{OLCHT}$ from samples at the normalized zeros $\alpha_{nj}$ in radius and at the uniformly spaced points $\frac{2\pi l}{2K+1}$ in azimuuth in the OLCHT domain in polar coordinates.

\begin{lemma} \label{lemma7}
	Let $f(r,\theta)$ be $\Omega-$bandlimited in the OLCHT domain with parameters $A$, $\bm{\tau}$, and $\bm{\eta}$ satisfying Assumption \ref{as1} and $b>0$. Then the $n$th Fourier coefficients $f_{n}\left( r\right)$ can be reconstructed at sampling point $\alpha_{vj}\in\mathbb{R}$ by 
	\begin{align}
		\begin{split}
			f_{n}(r)&={}(-1)^{v}\varsigma\, e^{-i\frac{a}{2b}r^{2}}\sum\limits_{m=-\infty}^{+\infty}J_{m}\left( \frac{\mu_{1}r}{b}\right)J_{m}^{2}\left( \frac{\mu_{2}\Omega}{b}\right)\\
			{}&\times \sum\limits_{j=1}^{\infty}J_{m}\left( \frac{|\bm{\tau }|\alpha_{vj}}{b}\right)e^{i\frac{a}{2b}\alpha_{vj}^{2}}f_{n}(\alpha_{vj})\vartheta_{vj}(r),
			\label{52}
		\end{split}
	\end{align}
	where $\varsigma, \mu_{1}, \mu_{2}, \alpha_{vj}$, and $\vartheta_{nj}(r)$ are the same as those stated.  
\end{lemma}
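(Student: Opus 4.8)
The plan is to obtain Lemma~\ref{lemma7} as an immediate consequence of Lemma~\ref{lemma3}, applied coefficient-by-coefficient to the Fourier expansion of $f(r,\theta)$. The decisive point is the very definition of $\Omega$-bandlimitedness in the OLCHT domain: a function $f(r,\theta)$ satisfying Assumption~\ref{as1} is $\Omega$-bandlimited in the OLCHT domain exactly when each of its Fourier coefficients obeys $H_{v}^{A,\bm{\tau},\bm{\eta}}[f_{n}](\rho)=0$ for $\rho\geq\Omega$. This is precisely the hypothesis under which Lemma~\ref{lemma3} was established, namely that $f_{n}(r)$ is $\Omega$-bandlimited isotropic in the $v$th-order OLCHT domain, with a \emph{single} common order $v$ for every index $n$.

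First I would write down the Fourier expansion $f(r,\theta)=\sum_{n}f_{n}(r)e^{in\theta}$ guaranteed by Assumption~\ref{as1}, noting that $f\in L^{2}(\mathbb{R}^{2})$ forces each coefficient $f_{n}(r)$ into $L^{2}(\mathbb{R})$ (with the natural $r\,\mathrm{d}r$ measure), so that Lemma~\ref{lemma3} is applicable. Next I would invoke the bandlimiting definition to conclude that each $f_{n}(r)$ is $\Omega$-bandlimited isotropic in the OLCHT domain with the fixed order $v$. Finally I would apply Lemma~\ref{lemma3} with $f_{n}(r)$ playing the role of $f(r)$, which yields (\ref{52}) directly; the sampling nodes $\alpha_{vj}=bz_{vj}/\Omega$ (with $z_{vj}$ the $j$th zero of $J_{v}$) and the interpolating functions $\vartheta_{vj}(r)$ all inherit the common order $v$ rather than the index $n$.

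The conceptual contrast with Lemma~\ref{lemma6} is what I would emphasise, and it is also where the only subtlety lies. In the OLCT setting one must first pass through Lemma~\ref{lemma5} to identify the effective Hankel order with the Fourier index $n$ (via the $H_{2n}$ structure of Lemma~\ref{lemma4}), which forces the order-dependent nodes $\alpha_{nj}$ and interpolants $\vartheta_{nj}$. Here the OLCHT bandlimiting hypothesis already fixes a uniform order $v$, so no analogue of Lemma~\ref{lemma5} is required and the reduction to Lemma~\ref{lemma3} is direct. I do not anticipate a genuine obstacle: once the definition is unwound, the statement is a one-line specialisation of Lemma~\ref{lemma3}, and the order consistency it exhibits is exactly the computational advantage advertised for the second sampling theorem.
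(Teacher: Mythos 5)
Your proposal is correct and follows exactly the paper's own route: the paper's proof is the one-line substitution of $f_{n}(r)$ for $f(r)$ in Lemma~\ref{lemma3}, justified by the definition of $\Omega$-bandlimitedness in the OLCHT domain, which is precisely your reduction. Your added remarks (the $L^{2}$ membership of each coefficient and the contrast with the order-dependent nodes in Lemma~\ref{lemma6}) only make explicit what the paper leaves implicit.
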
	

\begin{proof}
	Replacing $f(r)$ in Lemma \ref{lemma3} with $f_{n}(r)$, we can directly obtain
	\begin{align}
		\begin{split}
			f_{n}(r)&={}(-1)^{v}\varsigma\, e^{-i\frac{a}{2b}r^{2}}\sum\limits_{m=-\infty}^{+\infty}J_{m}\left( \frac{\mu_{1}r}{b}\right)J_{m}^{2}\left( \frac{\mu_{2}\Omega}{b}\right)\\
			{}&\times \sum\limits_{j=1}^{\infty}J_{m}\left( \frac{|\bm{\tau }|\alpha_{vj}}{b}\right)e^{i\frac{a}{2b}\alpha_{vj}^{2}}f_{n}(\alpha_{vj})\vartheta_{vj}(r).
			\label{25}
		\end{split}
	\end{align}
	Which completes the proof.
\end{proof}

According to Lemma \ref{lemma1} and \ref{lemma7}, An interpolation formula is obtained in the OLCHT domain. This interpolation formula is essentially different from Theorem \ref{th1} due to the consistency of the OLCHT order, where the sampling points are normalized zeros of the Bessel function on radius. Theorem \ref{th2} better reduces the number of normalized zeros.

\begin{theorem}\label{th2}
	Let $f(r,\theta)\in\mathscr{H}_{OLCHT}$ satisfy Assumption \ref{as1} and $b>0$. Then it can be reconstructed at the normalized zeros $\alpha_{vj}\in\mathbb{R}$ and at the uniformly spaced points $\frac{2\pi l}{2K+1}\in\mathbb{R}$ by
	\begin{align}
		\begin{split}
			f(r,\theta)&={}(-1)^{v}\varsigma e^{-i\frac{a}{2b}r^{2}}\sum\limits_{j=1}^{\infty}\sum_{l=0}^{2K}\sum\limits_{m=-\infty}^{+\infty}e^{i\frac{a}{2b}\alpha_{vj}^{2}}J_{m}\left( \frac{\mu_{1}r}{b}\right)J_{m}^{2}\left( \frac{\mu_{2}\Omega}{b}\right)\\
			{}&\times J_{m}\left( \frac{|\bm{\tau }| \alpha_{vj}}{b}\right) \vartheta_{vj}(r)f\left(\alpha_{vj},\frac{2\pi l}{2K+1} \right)o_{l}(\theta).
			\label{53}
		\end{split}
	\end{align}
	where $\varsigma, \mu_{1}, \mu_{2}, \alpha_{vj}, \vartheta_{vj}(r)$, and $o_{l}(\theta)$ are the same as those stated.  
\end{theorem}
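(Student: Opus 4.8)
The plan is to fuse the radial reconstruction supplied by Lemma~\ref{lemma7} with the angular reconstruction supplied by Stark's formula (Lemma~\ref{lemma1}), following the template of the proof of Theorem~\ref{th1}. The structural feature that makes both the argument and the final formula simpler than in Theorem~\ref{th1} is that in Lemma~\ref{lemma7} the OLCHT order $v$ is held fixed and does \emph{not} track the Fourier index $n$; consequently the radial nodes $\alpha_{vj}=bz_{vj}/\Omega$ and the entire radial interpolation kernel are common to every angular mode.

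First I would invoke the angular bandlimitation of $f\in\mathscr{H}_{OLCHT}$ to write the finite Fourier series $f(r,\theta)=\sum_{n=-K}^{K}f_{n}(r)e^{in\theta}$ from (\ref{19}), and apply Lemma~\ref{lemma7} to reconstruct each coefficient $f_{n}(r)$ from its radial samples $f_{n}(\alpha_{vj})$. The key observation---and the step I expect to require the most care---is that every factor in that reconstruction other than the sample value $f_{n}(\alpha_{vj})$ is independent of $n$: the prefactor $(-1)^{v}\varsigma\,e^{-i\frac{a}{2b}r^{2}}$, the Bessel products $J_{m}(\frac{\mu_{1}r}{b})J_{m}^{2}(\frac{\mu_{2}\Omega}{b})J_{m}(\frac{|\bm{\tau}|\alpha_{vj}}{b})$, the chirp $e^{i\frac{a}{2b}\alpha_{vj}^{2}}$, and the interpolating function $\vartheta_{vj}(r)$ all depend on the fixed order $v$ through $\alpha_{vj}$ alone, never on $n$. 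This is precisely what fails in Theorem~\ref{th1}, where the order equals $n$ and hence the nodes $\alpha_{nj}$ differ from mode to mode.

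Second I would substitute this expansion into $f(r,\theta)=\sum_{n=-K}^{K}f_{n}(r)e^{in\theta}$ and interchange the absolutely convergent summations so that the $n$-sum acts solely on $f_{n}(\alpha_{vj})e^{in\theta}$. Evaluating the Fourier series (\ref{19}) at the radius $r=\alpha_{vj}$ collapses this inner sum to $\sum_{n=-K}^{K}f_{n}(\alpha_{vj})e^{in\theta}=f(\alpha_{vj},\theta)$. Since $\theta\mapsto f(\alpha_{vj},\theta)$ is $\Omega_{FT}$-bandlimited to $\omega_{p}=\frac{K}{2\pi}$ for each fixed $\alpha_{vj}$, I then apply Stark's formula (\ref{20}) to replace $f(\alpha_{vj},\theta)$ by $\sum_{l=0}^{2K}f(\alpha_{vj},\frac{2\pi l}{2K+1})\,o_{l}(\theta)$; regrouping the $j$-, $l$-, and $m$-summations reproduces (\ref{53}) verbatim. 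Equivalently, mirroring Theorem~\ref{th1}, one may first express $f_{n}(\alpha_{vj})$ through Stark's formula as in (\ref{49}) and then carry out the $n$-sum via the Dirichlet-kernel identity $\sum_{n=-K}^{K}e^{in(\theta-\frac{2\pi l}{2K+1})}=(2K+1)\,o_{l}(\theta)$, the factor $(2K+1)$ cancelling the normalization in (\ref{49}). The substantive difference from Theorem~\ref{th1} is exactly that the single set of zeros $\{\alpha_{vj}\}$ serves all angular modes, which is the source of the reduced sampling count promised before the statement.
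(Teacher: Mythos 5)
Your proposal is correct and follows essentially the same route as the paper: the paper's proof substitutes Stark's formula (\ref{49}) with $\alpha_{nj}$ replaced by $\alpha_{vj}$ into Lemma~\ref{lemma7}, then sums over $n$ using the identity $(2K+1)\,o_{l}(\theta)=\sum_{n=-K}^{K}e^{in(\theta-\frac{2\pi l}{2K+1})}$, which is precisely your ``equivalently'' branch, while your primary route (collapsing $\sum_{n=-K}^{K}f_{n}(\alpha_{vj})e^{in\theta}=f(\alpha_{vj},\theta)$ first and then applying Stark's formula) is only a harmless reordering of the same two ingredients. Your identification of the key structural point---that the fixed order $v$ makes the radial kernel and nodes independent of $n$---matches the paper's reasoning exactly.
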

\begin{proof}
	Replacing $\alpha_{nj}$ in (\ref{49}) with $\alpha_{vj}$, we have
	\begin{eqnarray}
		\begin{split}
			f_{n}(\alpha_{vj})=\frac{1}{2K+1}\sum_{l=0}^{2K}f\left(\alpha_{vj},\frac{2\pi l}{2K+1} \right) e^{-in\frac{2\pi l}{2K+1}},\:\: -K\leq n\leq K.
			\label{54}  
		\end{split}
	\end{eqnarray}
	Using the (\ref{52}), we get
	\begin{align}
		\begin{split}
			f_{n}(r)&={} \frac{(-1)^{v}\varsigma}{2K+1}e^{-i\frac{a}{2b}r^{2}}\sum\limits_{m=-\infty}^{+\infty}J_{m}\left( \frac{\mu_{1}r}{b}\right)J_{m}^{2}\left( \frac{\mu_{2}\Omega}{b}\right)\sum\limits_{j=1}^{\infty}J_{m}\left( \frac{|\bm{\tau }|\alpha_{vj}}{b}\right)\\
			{}&\times e^{i\frac{a}{2b}\alpha_{vj}^{2}}\vartheta_{vj}(r)\sum_{l=0}^{2K}f\left(\alpha_{vj},\frac{2\pi l}{2K+1} \right) e^{-in\frac{2\pi l}{2K+1}},
			\label{55}
		\end{split}
	\end{align}
	for all $-K\leq n\leq K$. \\ 
	By the following triangle sum formula \cite{21}
	\begin{align}
		\begin{split}
			\left(2K+1 \right) o_{l}(\theta)=\sum\limits_{n=-K}^{K}e^{in\left( \theta-\frac{2\pi l}{2K+1}\right) }.
			\label{56}
		\end{split}
	\end{align}
	Applying (\ref{19}) and (\ref{56}), we obtain
	\begin{align}
		\begin{split}
			f(r,\theta)={}&\sum\limits_{n=-K}^{K}f_{n}(r)e^{in\theta}\\
			={}&(-1)^{v}\varsigma e^{-i\frac{a}{2b}r^{2}}\sum\limits_{m=-\infty}^{+\infty}J_{m}\left( \frac{\mu_{1}r}{b}\right)J_{m}^{2}\left( \frac{\mu_{2}\Omega}{b}\right)e^{i\frac{a}{2b}\alpha_{vj}^{2}}\\
			{}\times& \sum\limits_{j=1}^{\infty}\sum_{l=0}^{2K}J_{m}\left( \frac{|\bm{\tau }| \alpha_{vj}}{b}\right)\vartheta_{vj}(r)f\left(\alpha_{vj},\frac{2\pi l}{2K+1} \right) o_{l}(\theta).
			\label{57}
		\end{split}
	\end{align}
	Which completes the proof.
\end{proof}

\begin{remark} \label{re5}   	
	According to (\ref{53}) in Theorem \ref{th2}, it is clear that the required number of samples goes as $$(2K+1) N^{2}, \quad N\to\infty$$ where the number of normalized zeros takes $N(N\to\infty)$.
\end{remark}

\begin{remark} \label{re6}  	
	The (\ref{53}) in Theorem \ref{th2} is the classical interpolation formula \cite{16,20} of the HT domain  when $A=\left(0,1;-1,0 \right)$, $\bm{\tau}=\bm{0}$, and $\bm{\eta}=\bm{0}$, and its is the interpolation formula \cite{21,22} of the LCHT domain when $A=\left(a,b;c,d\right)\in\mathbb{R}^{2\times2}$, $\bm{\tau}=\bm{0}$, and $\bm{\eta}=\bm{0}$. So, the interpolation formula derived in (\ref{53}) can solve the nonbandlimited functions processing problem in the Hankel transform (HT) or the linear canonical Hankel transform (LCHT) domain.
\end{remark}

\begin{remark} \label{re7}  	
	It is emphasized here that the interpolation formula(\ref{53}) is essentially different from (\ref{45}) in Theorem \ref{th1}, because the transform domain in which the reconstructed object is located is different. By comparing (\ref{45}) and (\ref{53}), it is obvious that the second interpolation formula is better than the first interpolation formula in terms of computational complexity.
\end{remark}

\begin{corollary} \label{corollary2}
	Let $f(r,\theta)\in\mathscr{H}_{OLHCT}$ satisfy Assumption \ref{as1} and $b>0$. Then the OLCT $F^{A,\bm{\tau},\bm{\eta}}(\rho,\phi)$ of $f(r,\theta)$ can be reconstructed at the normalized zeros $\alpha_{nj}\in\mathbb{R}$ and at the uniformly spaced points $\frac{2\pi l}{2K+1}\in\mathbb{R}$ by	
	\begin{align}
		\begin{split}
			F^{A,\bm{\tau},\bm{\eta}}(\rho,\phi)&={}(-1)^{v}\varsigma e^{i\frac{d}{2b}\rho^{2}}\sum\limits_{j=1}^{\infty}\sum_{l=0}^{2K}\sum\limits_{m=-\infty}^{+\infty}e^{-i\frac{a}{2b}\alpha_{vj}^{2}}J_{m}\left( \frac{\mu_{2}\rho}{b}\right)J_{m}^{2}\left( \frac{\mu_{1}\Omega}{b}\right)\\
			{}&\times J_{m}\left( \frac{|\bm{\tau }| \alpha_{vj}}{b}\right) \vartheta_{vj}(\rho)F^{A,\bm{\tau},\bm{\eta}}\left(\alpha_{vj},\frac{2\pi l}{2K+1} \right)o_{l}(\phi).
			\label{533}
		\end{split}
	\end{align}
	where $\varsigma, \mu_{1}, \mu_{2}, \alpha_{vj}, \vartheta_{vj}(\rho)$, and $o_{l}(\phi)$ are the same as those stated.  
\end{corollary}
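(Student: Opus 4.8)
The plan is to mirror the proof of Corollary \ref{corollary1}, replacing the OLCT inversion by the OLCHT inversion (\ref{14}) and invoking Theorem \ref{th2} in place of Theorem \ref{th1}. The target identity (\ref{533}) is structurally (\ref{53}) with $(r,\theta)$ replaced by $(\rho,\phi)$, the leading chirp $e^{-i\frac{a}{2b}r^{2}}$ replaced by $e^{i\frac{d}{2b}\rho^{2}}$, the roles of $\mu_{1}=|\bm{\tau}|$ and $\mu_{2}=|d\bm{\tau}-b\bm{\eta}|$ interchanged, and the samples $f(\alpha_{vj},\cdot)$ replaced by $F^{A,\bm{\tau},\bm{\eta}}(\alpha_{vj},\cdot)$. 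The whole task therefore reduces to showing that $F^{A,\bm{\tau},\bm{\eta}}$ itself lies in $\mathscr{H}_{OLHCT}$ for the inverse set of parameters, after which Theorem \ref{th2} applies verbatim.

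First I would establish the bandlimiting. Since $f(r,\theta)\in\mathscr{H}_{OLHCT}$, Lemma \ref{lemma4} (in the concise form of Remark \ref{re11}) gives the expansion $F^{A,\bm{\tau},\bm{\eta}}(\rho,\phi)=\sum_{n} H_{n}^{A,\bm{\tau},\bm{\eta}}[f_{n}](\rho)e^{in\phi}$, and by the definition of $\mathscr{H}_{OLHCT}$ each coefficient $H_{n}^{A,\bm{\tau},\bm{\eta}}[f_{n}](\rho)$ vanishes for $\rho\geq\Omega$ while the angular index is confined to $-K\leq n\leq K$. Invoking the OLCHT inversion formula (\ref{14}), which recovers $f$ from $F^{A,\bm{\tau},\bm{\eta}}$ via the transform with parameters $-A^{-1}$, $-\bm{\xi}$, $-\bm{\gamma}$, I would read off that $F^{A,\bm{\tau},\bm{\eta}}$ is $\Omega-$bandlimited in the OLCHT domain with respect to those inverse parameters and angularly bandlimited to $\omega_{p}=\frac{K}{2\pi}$; that is, $F^{A,\bm{\tau},\bm{\eta}}\in\mathscr{H}_{OLHCT}$, exactly as the passage to $A^{-1},\bm{\xi},\bm{\gamma}$ produced $F\in\mathscr{H}_{OLCT}$ in Corollary \ref{corollary1}.

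With $F^{A,\bm{\tau},\bm{\eta}}$ now in $\mathscr{H}_{OLHCT}$, I would apply Theorem \ref{th2} to it. The only thing to check is the bookkeeping of the parameter duality: under the inverse offsets $-\bm{\xi}=d\bm{\tau}-b\bm{\eta}$ and $-\bm{\gamma}=c\bm{\tau}-a\bm{\eta}$, the quantity playing the role of $|\bm{\tau}|$ becomes $|d\bm{\tau}-b\bm{\eta}|=\mu_{2}$ and, using $ad-bc=1$, the quantity playing the role of $|d\bm{\tau}-b\bm{\eta}|$ returns to $|\bm{\tau}|=\mu_{1}$; simultaneously the $a$-chirp and the $d$-chirp trade places. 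Substituting these replacements into (\ref{53}) converts the Bessel factor $J_{m}(\frac{\mu_{1}r}{b})$ into $J_{m}(\frac{\mu_{2}\rho}{b})$, the factor $J_{m}^{2}(\frac{\mu_{2}\Omega}{b})$ into $J_{m}^{2}(\frac{\mu_{1}\Omega}{b})$, the leading chirp into $e^{i\frac{d}{2b}\rho^{2}}$, and the interpolating functions $\vartheta_{vj}(r)$, $o_{l}(\theta)$ into $\vartheta_{vj}(\rho)$, $o_{l}(\phi)$, which is precisely (\ref{533}).

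I expect the main obstacle to be the first step rather than the final substitution: justifying rigorously that the OLCT of a bandlimited function is again a member of $\mathscr{H}_{OLHCT}$ with the inverse parameters, i.e. making the duality argument of Corollary \ref{corollary1} precise, and then verifying that the induced parameter swap $\mu_{1}\leftrightarrow\mu_{2}$ and chirp exchange are reproduced faithfully by the inversion formula (\ref{14}). Once that correspondence is pinned down, Theorem \ref{th2} delivers (\ref{533}) with no further computation.
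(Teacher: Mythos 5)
Your proposal takes essentially the same route as the paper: the paper's own proof likewise reduces Corollary \ref{corollary2} to Theorem \ref{th2}, first asserting (via the expansions (\ref{ww1})--(\ref{ww2}) set up in the proof of Corollary \ref{corollary1}) that $F^{A,\bm{\tau},\bm{\eta}}$ satisfies Assumption \ref{as1} with a finite Fourier series whose coefficients vanish for $\rho\geq\Omega$, and then applying Theorem \ref{th2} under the inverse parameters, which yields precisely the $\mu_{1}\leftrightarrow\mu_{2}$ swap and chirp exchange you describe. The paper is in fact terser than your write-up (it leaves the inverse-parameter bookkeeping and the duality step implicit, exactly the point you flag as the main obstacle), so your proposal matches, and even slightly sharpens, the published argument.
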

\begin{proof}
	According to (\ref{ww1}), it implies that $F^{A,\bm{\tau},\bm{\eta}}$ satisfies Assumption  \ref{as1}, and
	\begin{eqnarray}
		\begin{split} 
			F^{A,\bm{\tau},\bm{\eta}}(\rho,\phi)=\sum_{n=-K}^{K} H_{-n}^{A,\bm{\tau},\bm{\eta}}[f_{-n}](\rho)e^{in\phi}.
			\label{ww2}
		\end{split}
	\end{eqnarray}
	By using Theorem \ref{th2}, we have
	\begin{align}
		\begin{split}
			F^{A,\bm{\tau},\bm{\eta}}(\rho,\phi)&={}(-1)^{v}\varsigma e^{i\frac{d}{2b}\rho^{2}}\sum\limits_{j=1}^{\infty}\sum_{l=0}^{2K}\sum\limits_{m=-\infty}^{+\infty}e^{-i\frac{a}{2b}\alpha_{vj}^{2}}J_{m}\left( \frac{\mu_{2}\rho}{b}\right)J_{m}^{2}\left( \frac{\mu_{1}\Omega}{b}\right)\\
			{}&\times J_{m}\left( \frac{|\bm{\tau }| \alpha_{vj}}{b}\right) \vartheta_{vj}(\rho)F^{A,\bm{\tau},\bm{\eta}}\left(\alpha_{vj},\frac{2\pi l}{2K+1} \right)o_{l}(\phi).
		\end{split}
	\end{align}
	Which completes the proof.
\end{proof}

\section{Conclusions}
\label{Con}
This paper studies the sampling theorems of bandlimited functions in the OLCT and OLCHT domains in polar coordinates, that is, interpolating uniform samples in radius and interpolating the highest frequency range samples in azimuth, where the sampling points are normalized zeros of the Bessel function on radius. The first interpolation formula is a generalization of the FT and LCT domains, which is more general. The second interpolation formula is superior to the first interpolation formula in terms of computational complexity due to the consistency of the OLCHT order. \


\section*{Declarations}
The authors declare that they have no known competing financial interests or personal relationships that could have appeared to influence the work reported in this paper.

\section*{Data Availability Statement}
Not applicable.

\section*{Code Availability Statement}
Not applicable.

\bibliography{sn-bibliography}

\end{document}